\newcommand{\rk}{{\rm rk}}
\newtheorem{lemma1}{}[section]
\newenvironment{lemma}{\begin{lemma1}{\bf Lemma.}}{\end{lemma1}}
\newenvironment{theorem}{\begin{lemma1}{\bf Theorem.}}{\end{lemma1}}
\newenvironment{proposition}{\begin{lemma1}{\bf Proposition.}}{\end{lemma1}}
\newenvironment{corollary}{\begin{lemma1}{\bf Corollary.}}{\end{lemma1}}
\newenvironment{remark}{\begin{lemma1}{\bf Remark.}\rm}{\end{lemma1}}
\newenvironment{definition}{\begin{lemma1}{\bf Definition.}}{\end{lemma1}}
\newenvironment{setup}{\begin{lemma1}{\bf Setup.} \rm}{\end{lemma1}}
\newenvironment{remark*}{{\bf Remark.}}{}
\newenvironment{example*}{{\bf Example.}}{}
\newcommand{\R}{\ensuremath{\mathbb{R}}}
\newcommand{\Q}{\ensuremath{\mathbb{Q}}}
\newcommand{\N}{\ensuremath{\mathbb{N}}}
\newcommand{\PP}{\ensuremath{\mathbb{P}}}
\newcommand{\holom}[3]{\ensuremath{#1:#2  \rightarrow #3}}
\newcommand{\fibre}[2]{\ensuremath{#1^{-1} (#2)}}
\newcommand\sF{{\mathcal F}}
\newcommand\sG{{\mathcal G}}
\newcommand\sO{{\mathcal O}}
\newcommand{\chow}[1]{\ensuremath{\mathcal{C}(#1)}}
\newcommand{\Chow}[1]{\ensuremath{\mathcal{C}(#1)}}
\title{Twisted cotangent sheaves and a Kobayashi-Ochiai theorem for foliations} 
\date{July 12, 2013}
\author{Andreas H\"oring}
\subjclass[2010]{14F10, 37F75, 14M22, 14E30, 14J40}
\keywords{cotangent sheaf, foliations, Kobayashi-Ochiai theorem}
\thanks{}
\address{Andreas H\"oring, Universit{\'e} Pierre et Marie Curie, Institut de math{\'e}matiques de Jussieu,
Projet Topologie et g{\'e}om{\'e}trie alg{\'e}briques, Case 247,  4 place Jussieu, 75005 Paris, France}
\email{hoering@math.jussieu.fr}
\begin{document}

\begin{abstract}
Let $X$ be a normal projective variety, and let $A$ be an ample Cartier divisor on $X$.
We prove that the twisted cotangent sheaf $\Omega_X \otimes A$ is generically nef with respect to
the polarisation  $A$
unless $X$ is a projective space. As an application we prove a Kobayashi-Ochiai theorem
for foliations: if $\sF \subsetneq T_X$ is a foliation of rank $r$
such that $\det \sF \equiv i_{\sF} A$, then we have $i_{\sF} \leq r$.
\end{abstract}

\maketitle

%\vspace{-2ex}

\section{Introduction}

Let $X$ be a normal projective variety of dimension $n$, and let $A$ be an ample Cartier divisor on $X$. A classical theorem
of Kobayashi and Ochiai \cite{KO73} characterises the projective space as the unique variety such
that $K_X + (n+1) A$ is trivial and hyperquadrics as the varieties such that $K_X + n A$ is trivial.
This result can be seen as the starting point of the adjunction theory of projective manifolds, combined
with the minimal model program this theory gives us very precise information about the relation between
the positivity of the canonical divisor $K_X$ and some polarisation $A$ on $X$. The aim of this paper is to prove a basic result
relating the positivity of the cotangent sheaf $\Omega_X$ to a polarisation:

\begin{theorem} \label{theoremmain} 
Let $X$ be a normal projective variety of dimension $n$, and let $A$ be an ample Cartier divisor on $X$.
Then one of the following holds:
\begin{enumerate}
\item We have $(X,  \sO_X(A)) \simeq (\PP^n, \sO_{\PP^n}(1))$; or
\item the twisted cotangent sheaf $\Omega_X \otimes A$ is generically nef with respect to $A$.
\end{enumerate}
If the  twisted cotangent sheaf $\Omega_X \otimes A$ is not generically ample with respect to 
$A$, one of the following holds:
\begin{enumerate}
\item There exists a normal projective variety $Y$ of dimension at most $n-1$
and a vector bundle $V$ on $Y$ such that
$X' := \PP(V)$
admits a birational morphism \holom{\mu}{X'}{X} such that $\sO_{X'}(\mu^* A) \simeq \sO_{\PP(V)}(1)$; or
\item $(X,  \sO_X(A)) \simeq (Q^n, \sO_{Q^n}(1))$ where $Q^n \subset \PP^{n+1}$ is a (not necessarily smooth) quadric hypersurface.
\end{enumerate}
\end{theorem}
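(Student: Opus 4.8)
The plan is to recast the statement as one about the Harder--Narasimhan filtration of $T_X$ with respect to $A$, and then feed the destabilising subsheaf into the structure theory of foliations with positive determinant. Recall that a torsion-free sheaf on $X$ is generically nef (resp.\ ample) with respect to $A$ exactly when its minimal Harder--Narasimhan slope $\mu_{A,\min}$ is $\geq 0$ (resp.\ $>0$); by Mehta--Ramanathan this agrees with nefness (resp.\ ampleness) of the restriction to a general curve cut out by large multiples of $A$, and a general such curve avoids the codimension $\geq 2$ locus where $\Omega_X$ and its reflexive hull differ, so one may work with $\Omega_X^{[1]}=(\Omega_X)^{**}$ throughout. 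Dualising, $\Omega_X\otimes A$ fails to be generically nef (resp.\ ample) precisely when $\mu_{A,\max}(T_X)>A^n$ (resp.\ $\geq A^n$). So I would take the maximal destabilising subsheaf $\sF\subseteq T_X$: it is saturated and $\mu_A$-semistable, and it is automatically a foliation, since the integrability obstruction is an $\sO_X$-linear map $\wedge^2\sF\to T_X/\sF$ from a $\mu_A$-semistable sheaf of slope $2\mu_A(\sF)$ to one whose maximal slope is $<\mu_A(\sF)<2\mu_A(\sF)$ (using $\mu_A(\sF)=\mu_{A,\max}(T_X)>A^n>0$), hence it vanishes. Writing $r=\rk\sF$, everything reduces to: \emph{if $\sF\subseteq T_X$ is a foliation of rank $r$ with $\det\sF\cdot A^{n-1}\geq r(A^n)$, then $(X,\sO_X(A))$ is $(\PP^n,\sO_{\PP^n}(1))$, or $(Q^n,\sO_{Q^n}(1))$, or $X$ is birational over a base of dimension $\leq n-1$ to some $\PP(V)$ with $\mu^*A\simeq\sO_{\PP(V)}(1)$; moreover the inequality is strict only in the first case.}

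If $\sF=T_X$, the hypothesis reads $(K_X+nA)\cdot A^{n-1}\leq 0$. I would use that every extremal ray carries a rational curve $C$ with $0<-K_X\cdot C\leq n+1$ and $A\cdot C\geq 1$, so $K_X+(n+1)A$ is nef; if it has zero intersection with $A^{n-1}$ then it is numerically trivial by the Hodge index theorem and classical Kobayashi--Ochiai gives $(\PP^n,\sO_{\PP^n}(1))$, while in the remaining range $0<(K_X+(n+1)A)\cdot A^{n-1}<A^n$ I would invoke the adjunction-theoretic classification of pairs with $K_X+nA$ not nef, using that ampleness of $A$ forces the scrolls occurring there to be honest $\PP(V)$'s with $\sO_{\PP(V)}(1)=\mu^*A$, and that $K_X+nA\equiv 0$ yields $(Q^n,\sO_{Q^n}(1))$; the strict form of the inequality excludes all but $\PP^n$. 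Here one must also cope with the fact that $K_X$ need not be $\mathbb{Q}$-Cartier, so the cone theorem has to be applied on a resolution or routed through the foliation rather than used naively on $X$.

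If $\sF\subsetneq T_X$, then $\mu_A(\sF)>0$, so $\det\sF$ has positive degree on the general member of a covering family of curves, and by the Bogomolov--McQuillan criterion in the form of Kebekus--Sol\'a Conde--Toma and Araujo--Druel--Kov\'acs the foliation $\sF$ is algebraically integrable with rationally connected general leaf. Let $F$ be the normalisation of the closure of a general leaf; then $\dim F=r$ and, for $F$ general, $K_F=-(\det\sF)|_F$. A minimal rational curve $\ell\subseteq F$ satisfies $-K_F\cdot\ell\leq r+1$, with equality only if $F\simeq\PP^r$; transporting the bound $\det\sF\cdot A^{n-1}\geq r(A^n)$ to the leaf via the covering family, together with $A\cdot\ell\geq 1$, should pin down $F\simeq\PP^r$ and $A|_F\simeq\sO_{\PP^r}(1)$. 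The family of leaves then yields a fibration $X'\to Y$ from a birational modification $\mu\colon X'\to X$ whose general fibre is $(\PP^r,\sO(1))$; using the rigidity of $\PP^r$ and standard facts on such fibrations I would upgrade this to $X'=\PP(V)$ with $\sO_{\PP(V)}(1)=\mu^*A$, the exception being that the leaves sweep out $X$ with $r=n$, in which case classical Kobayashi--Ochiai gives $\PP^n$ or $Q^n$; and in every subcase the inequality can be strict only for $\PP^n$, which is what closes the first assertion of the theorem.

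The hard part will be this last case: passing from the fibrewise isomorphism ``$A$ restricts to $\sO(1)$ on each leaf'' to an honest projectivised bundle over a normal base with $\mu^*A=\sO_{\PP(V)}(1)$, while keeping control of the singularities of $X$, of $\sF$, and of the leaf closures. Making the numerical inequality on $X$ interact cleanly with intersection theory on the leaves (rather than only on $X$), and teasing apart --- inside the non-generically-ample case --- the $\PP(V)$-alternative from the two sporadic pairs $(\PP^n,\sO_{\PP^n}(1))$ and $(Q^n,\sO_{Q^n}(1))$, are the other delicate points.
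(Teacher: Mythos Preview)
Your overall framework --- take the maximal destabilising subsheaf $\sF\subseteq T_X$, observe it is a foliation, apply Bogomolov--McQuillan to make the leaves algebraic, and analyse the leaf --- is exactly the paper's, and the second assertion of the theorem is indeed handled by this kind of leaf classification plus a passage to a projective bundle. But the proof of the \emph{first} assertion has a genuine gap in the case $\sF\subsetneq T_X$.

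Concretely: suppose $\mu_A(\sF\otimes A^*)>0$, you identify the general leaf as $(\PP^r,\sO(1))$, and you upgrade the family of leaves to a birational model $\mu\colon X'=\PP(V)\to X$ with $\sO_{\PP(V)}(1)\simeq\mu^*A$. You then assert that ``in every subcase the inequality can be strict only for $\PP^n$'', but you do not say why. The step you label ``transporting the bound $\det\sF\cdot A^{n-1}\geq r\,A^n$ to the leaf'' is not a valid operation as stated: the hypothesis is an intersection number on the $n$-dimensional $X$, while the leaf is $r$-dimensional, and minimal rational curves on $F$ only see $(\det\sF)|_F\cdot\ell$ and $A|_F\cdot\ell$, not the global $A^{n-1}$-slope. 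Knowing $F\simeq\PP^r$ and $X'=\PP(V)$ does not, by itself, contradict $\mu_A(\sF\otimes A^*)>0$. The paper closes this with a global intersection-theoretic lemma on $\PP(V)$: for a nef bundle $V$ of rank $r{+}1$ on a smooth base $B$ of dimension $m=n-r$, with tautological class $\zeta$, one has
\[
(K_{\PP(V)/B}+r\zeta)\cdot\zeta^{n-1}=s_{(2,1^{m-2},0)}(V)\geq 0
\]
by Schur-polynomial positivity for nef bundles. Identifying $\zeta$ with $\mu^*A$ and $K_{\PP(V)/B}$ with $K_\sF$ on a neighbourhood of a MR-general curve gives $(K_\sF+rA)\cdot A^{n-1}\geq 0$, i.e.\ $\mu_A(\sF\otimes A^*)\leq 0$, the sought contradiction. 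Without an argument of this type, your proof of the first part does not close.

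A secondary point concerns the second assertion. In the borderline case $\mu_A(\sF\otimes A^*)=0$ the leaf need not be $\PP^r$: the paper's leaf classification also allows a smooth hyperquadric $Q^r$, a $\PP^{r-1}$-bundle over $\PP^1$, or a cone structure over $\PP^1$. The latter two are absorbed into the projective-bundle conclusion by passing to the rank $r{-}1$ subfoliation, but the quadric case with $r<n$ is not: one must show that a \emph{quadric fibration} $X_C\to C$ over a MR-general curve with the given numerics cannot occur, which the paper does via a relative canonical bundle computation and Kawamata--Viehweg vanishing. Your sketch jumps directly from ``leaf $\simeq\PP^r$'' to the conclusion and does not address this branch.
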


As an application we obtain a bound for the index of a $\Q$-Fano distribution:

\begin{corollary} \label{corollaryKO}
Let $X$ be a normal projective variety, and let $A$ be an ample Cartier divisor on $X$.
Let $\sF \subsetneq T_X$ be a distribution of rank $r$ such that $\det \sF$ is $\Q$-Cartier 
and $\det \sF \equiv i_{\sF} A$.
Then we have
$
i_{\sF} \leq r.
$ 
\end{corollary}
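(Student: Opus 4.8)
The plan is to derive the bound from Theorem~\ref{theoremmain} by dualising the inclusion $\sF \subsetneq T_X$. Recall that a distribution has $T_X/\sF$ torsion-free, so $\sF$ is saturated and $1 \le r \le n-1$; and if one allows non-saturated $\sF$, replacing it by its saturation $\bar\sF$ only helps, since $\det\bar\sF = \det\sF + E$ with $E$ effective, whence $\det\bar\sF \cdot A^{n-1} \ge \det\sF\cdot A^{n-1} = i_{\sF}A^{n}$, so it suffices to bound $\det\bar\sF\cdot A^{n-1}$ from above. Dualising $0 \to \sF \to T_X \to T_X/\sF \to 0$ and using that the support of $\sExt^{1}(T_X/\sF,\sO_X)$ has codimension $\ge 2$ (as $T_X/\sF$ is torsion-free and $X$ is normal), one obtains a morphism $\Omega_X \to \sF^{*}$ surjective in codimension one. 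Twisting by $A$ and taking $r$-th exterior powers yields a morphism
\[
\wedge^{r}(\Omega_X \otimes A) \lra \wedge^{r}(\sF^{*}\otimes A),
\]
again surjective in codimension one, whose target on the big open set where $\sF \subset T_X$ is a subbundle is the line bundle $\det(\sF^{*})\otimes rA$, with $c_{1}\cdot A^{n-1} = (r - i_{\sF})A^{n}$.

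Assume first $(X,\sO_X(A)) \not\simeq (\PP^{n},\sO_{\PP^{n}}(1))$. Then Theorem~\ref{theoremmain} gives that $\Omega_X \otimes A$ is generically nef with respect to $A$. I would then take a general complete intersection curve $C$ cut out by $n-1$ general members of $|mA|$ for $m \gg 0$: such a $C$ lies in the smooth locus of $X$ and avoids the codimension~$\ge 2$ locus where the surjection above degenerates, so $(\Omega_X\otimes A)|_{C}$ is a nef vector bundle, hence so is $\wedge^{r}\big((\Omega_X\otimes A)|_{C}\big) = \big(\wedge^{r}(\Omega_X\otimes A)\big)|_{C}$. A nef bundle on a smooth curve admits no surjection onto a line bundle of negative degree, and restricting the displayed morphism to $C$ gives exactly such a surjection onto a line bundle of degree proportional to $(r - i_{\sF})A^{n}$; since $A^{n} > 0$ we conclude $i_{\sF} \le r$.

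It remains to treat $(X,\sO_X(A)) = (\PP^{n},\sO_{\PP^{n}}(1))$, where $\Omega_X\otimes A$ is genuinely not generically nef and a direct argument is needed. Here numerical equals linear equivalence, so $\det\sF = \sO_{\PP^{n}}(i_{\sF})$, and the codimension-one inclusion $\det\sF \hookrightarrow \wedge^{r}T_{\PP^{n}} \cong \Omega_{\PP^{n}}^{n-r}(n+1)$ extends over all of $\PP^{n}$ (it is smooth and the target is locally free), so $H^{0}\big(\PP^{n},\Omega_{\PP^{n}}^{n-r}(n+1-i_{\sF})\big) \ne 0$. Since $1 \le n-r \le n-1$, Bott's formula forces $n+1 - i_{\sF} \ge (n-r)+1$, that is $i_{\sF} \le r$.

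The content lies entirely in Theorem~\ref{theoremmain}; what remains is routine --- the codimension estimate for $\sExt^{1}$ of a torsion-free sheaf on a normal variety, the stability of ``surjective in codimension one'' under exterior powers, and the saturation remark. The point I would flag as the real subtlety is the $\PP^{n}$ case: it must be handled by hand, and the crude bound from the Euler sequence only gives $i_{\sF}\le r+1$ --- it is the sharper identification $\wedge^{r}T_{\PP^{n}} \cong \Omega_{\PP^{n}}^{n-r}(n+1)$ together with Bott's formula that recovers $i_{\sF}\le r$. Note also that only the ``generically nef'' alternative of Theorem~\ref{theoremmain} is used; the stronger ``generically ample'' statement, with its $\PP(V)$ and quadric exceptions, would enter only for analysing the equality case $i_{\sF}=r$.
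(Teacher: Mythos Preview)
Your argument is correct and matches the paper's: dualise to a codimension-one quotient $\Omega_X \otimes A \to \sF^* \otimes A$ of determinant numerically $(r-i_\sF)A$ and apply the generic nefness from Theorem~\ref{theoremmain}; the passage through $\wedge^r$ is unnecessary (a generically nef sheaf already has no torsion-free quotient of negative slope on an MR-general curve) but harmless. You are in fact more careful than the paper on one point: the paper's one-line proof claims a direct contradiction with Theorem~\ref{theoremmain} without explicitly treating the $(\PP^n,\sO_{\PP^n}(1))$ exception, whereas you correctly flag that this case needs separate handling and dispatch it cleanly via $\wedge^r T_{\PP^n}\cong\Omega^{n-r}_{\PP^n}(n+1)$ and Bott's formula.
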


Indeed if we had $i_{\sF} > r$, then 
$
\Omega_X \otimes A \rightarrow \sF^* \otimes A
$
would be a quotient with antiample determinant, in particular $\Omega_X \otimes A$
would not be generically nef with respect to $A$,
in contradiction to the first part of Theorem \ref{theoremmain}.
This Kobayashi-Ochiai theorem for foliations generalises similar results obtained recently
by Araujo and Druel \cite[Thm.1.1, Sect.4]{AD12}. 
Note that the method of proof is quite different: while the work of Araujo and Druel is based
on the geometry of the general log-leaf, Theorem \ref{theoremmain}
improves a semipositivity result for $\Omega_X \otimes A$ proven in \cite{a11}.
The proof of this semipositivity results relies on comparing the positivity of a foliation $\sF \otimes A$
along a very general curve $C \subset X$ with the positivity of a relative canonical divisor $K_{X'/Y}+r \mu^* A$
(cf. Section \ref{sectiontwistedcotangent}). The advantage of this technique is that we can make weaker assumptions on the variety $X$ or the foliation $\sF$, the disadvantage is that we do not get any information
about the singularities of the foliation $\sF$. For the description of the boundary case in Corollary \ref{corollaryKO} we therefore follow closely the arguments in  \cite[Thm.4.11]{AD12}:

\begin{theorem} \label{theoremKO}  
Let $X$ be a normal projective variety, and let $A$ be an ample Cartier divisor on $X$.
Let $\sF \subsetneq T_X$ be a foliation of rank $r$ such
that $\det \sF$ is $\Q$-Cartier and $\det \sF \sim_\Q r A$. 

Then $X$ is a generalised cone, more precisely there exists a normal projective variety $Y$
and an ample line bundle $M$ on $Y$ such that $X' := \PP(M \oplus \sO_Y^{\oplus r})$
admits a birational morphism \holom{\mu}{X'}{X} such that $\mu_* T_{X'/Y} = \sF$
and $\sO_{X'}(\mu^* A) \simeq \sO_{\PP(M \oplus \sO_Y^{\oplus r})}(1)$.
\end{theorem}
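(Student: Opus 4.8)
The plan is to combine Theorem~\ref{theoremmain} with the structure theory of foliations on the generalised cones it produces. First I would apply Theorem~\ref{theoremmain} to $(X, \sO_X(A))$. Since $\det \sF \sim_\Q rA$ and $\sF$ is a proper subsheaf of $T_X$ of rank $r < n$, the quotient $\Omega_X \otimes A \to \sF^* \otimes A$ has determinant $-\det\sF + rA \equiv 0$, so $\Omega_X \otimes A$ is \emph{not} generically ample with respect to $A$; moreover $X \not\simeq \PP^n$ because $\PP^n$ carries no rank-$r$ foliation with $\det\sF \equiv rA = \sO_{\PP^n}(r)$ for $r<n$ (the index of $T_{\PP^n}$ is $n+1$ and its proper foliations have strictly smaller determinant degree). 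Hence we are in the second dichotomy of Theorem~\ref{theoremmain}, and since $\det\sF$ is an honest $\Q$-divisor with $\det\sF\equiv rA$, one checks case (b) (a quadric) is likewise excluded for $r<n$; so we obtain $Y$, a vector bundle $V$ on $Y$ with $\dim Y \le n-1$, and a birational morphism $\mu \colon X' = \PP(V) \to X$ with $\sO_{X'}(\mu^*A) \simeq \sO_{\PP(V)}(1)$.

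Next I would pull back the foliation: on $X'$ the birational morphism $\mu$ induces a foliation $\sF' := \mu^{-1}\sF$ (the saturation of $\mu^*\sF$ in $T_{X'}$) of the same rank $r$ with $\det\sF' \sim_\Q r\mu^*A \sim_\Q \sO_{\PP(V)}(r)$, and $\mu_*\sF' = \sF$. The key computation is to compare $\sF'$ with the relative tangent sheaf $T_{X'/Y}$ of the projective bundle, which has rank $\rk V - 1$ and relative anticanonical class $\det T_{X'/Y} = \sO_{\PP(V)}(\rk V) \otimes \pi^*\det V^{-1}$ (where $\pi\colon X' \to Y$). The point is that a rank-$r$ foliation with determinant as positive as $\sO(r)$ along the fibres $\PP^{\rk V -1}$ of $\pi$ must, by the argument of \cite[Thm.4.11]{AD12}, be everywhere tangent to these fibres — so $\rk V - 1 = r$, i.e. $V$ has rank $r+1$, and $\sF' = T_{X'/Y}$. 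Restricting to a general fibre, $\sO_{\PP(V)}(1)|_{\mathrm{fibre}} = \sO_{\PP^r}(1)$ forces the normalisation $\det V|_{\text{appropriate}}$ to match, and a splitting-type argument (using that the foliation is relatively the full tangent sheaf of $\PP^r$-bundle and that $\sO_{X'}(\mu^*A)$ is relatively $\sO(1)$) shows $V \simeq M \oplus \sO_Y^{\oplus r}$ for an ample line bundle $M$ on $Y$; amplitude of $M$ comes from $\mu$ being a genuine contraction of the negative section $\PP(\sO_Y^{\oplus r}) = Y \times \PP^{r-1}$... more precisely from the fact that $\mu$ contracts exactly the locus where $\sO_{\PP(V)}(1)$ fails to be ample.

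The main obstacle, as flagged in the paper's own discussion, is precisely that Theorem~\ref{theoremmain} gives no control over the \emph{singularities} of $\sF$, so the identification $\sF' = T_{X'/Y}$ does not come for free from the cotangent positivity input; it requires the leaf-geometry arguments of Araujo--Druel. Concretely, the hard step is showing that the foliation $\sF'$ on $\PP(V)$ with $\det\sF' \sim_\Q \sO(r)$ is actually the relative tangent bundle of a $\PP^r$-subbundle and that $V$ splits off a trivial summand of corank one; this is where \cite[Thm.4.11]{AD12} is invoked essentially verbatim. Once the splitting $V \simeq M \oplus \sO_Y^{\oplus r}$ is in hand, the remaining claims — $\mu_*T_{X'/Y} = \sF$ and $\sO_{X'}(\mu^*A) \simeq \sO_{\PP(M\oplus\sO_Y^{\oplus r})}(1)$ — are already recorded from the application of Theorem~\ref{theoremmain} together with the identification of $\sF'$ with $T_{X'/Y}$, and the ampleness of $M$ makes $X'$ the standard resolution of the generalised cone over $Y$ embedded by $M$, completing the proof.
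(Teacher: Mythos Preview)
Your proposal diverges from the paper's argument in a way that leaves a genuine gap. The projective-bundle structure produced by Theorem~\ref{theoremmain} is built from the first piece $\sF_1$ of the Harder--Narasimhan filtration of $T_X$ (or, in the scroll/cone cases of Lemma~\ref{lemmaidentifyfibres}, from a rank-$(\rk\sF_1-1)$ subfoliation of it), \emph{not} from the given foliation $\sF$. Nothing in Theorem~\ref{theoremmain} tells you that $\rk V-1=r$ or that the fibres of $X'\to Y$ are closures of $\sF$-leaves; one only knows $\sF\subset\sF_1$ (equal slopes), and $\sF_1$ could be strictly larger. Your sentence ``must be everywhere tangent to these fibres --- so $\rk V-1=r$'' is precisely where the argument is missing, and \cite[Thm.~4.11]{AD12} does not supply it: that theorem treats codimension-one del Pezzo foliations and presupposes that the family of leaves under study is the family of $\sF$-leaves. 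The exclusion of the quadric case is likewise unjustified for singular quadrics.

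The paper proceeds differently and avoids this problem. It uses only the \emph{first} part of Theorem~\ref{theoremmain} (generic nefness of $\Omega_X\otimes A$) to deduce that $\sF$ is semistable with $\mu_A(\sF\otimes A^*)=0$; then $\sF|_C$ is ample on a MR-general curve, and Bogomolov--McQuillan gives algebraic integrability of $\sF$ itself. The family $X'\to Y$ is then constructed directly as the (normalised) family of $\sF$-leaves, so its fibres have dimension $r$ by construction. The Araujo--Druel log-leaf divisor $\Delta$ with $K_{X'/Y}+\Delta\sim_\Q -r\mu^*A$ now forces $(F,\Delta\cap F)\simeq(\PP^r,\text{hyperplane})$, hence $X'\simeq\PP(V)$ with $\rk V=r+1$. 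The splitting $V\simeq\det V\oplus\sO_Y^{\oplus r}$ is obtained not by a ``splitting-type argument'' but by a careful analysis: one shows $\Delta$ is a single reduced component $E$ which is a genuine $\PP^{r-1}$-bundle over $Y$, uses the short exact sequence $0\to\det V\to V\to W\to 0$ coming from $E$, proves $W$ is numerically flat and then trivial via Lemma~\ref{lemmatrivialbundle}, and finally invokes \cite{Fuj92} for the splitting. Your outline skips this entire mechanism, which is where most of the work lies.
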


This statement generalises a classical theorem of Wahl \cite{Wah83, Dru04} on ample line bundles
contained in the tangent sheaf. If $X$ is smooth, then Corollary \ref{corollaryKO} and Theorem \ref{theoremKO}
are special cases of the characterisation of the projective space and hyperquadrics by Araujo, Druel and Kov\'acs \cite[Thm.1.1]{ADK08}.
Vice versa, Theorem \ref{theoremmain} yields a weak version of \cite[Thm.1.2]{ADK08}, \cite[Thm.1.1]{Par10} for normal varieties:

\begin{corollary}
Let $X$ be a normal projective variety of dimension $n$, and let $A$ be an ample Cartier divisor on $X$.
Suppose that for some positive $m \in \N$ we have
$$
H^0(X, (T_X \otimes A^*)^{[\otimes m]}) \neq 0,
$$
where $(T_X \otimes A^*)^{[\otimes m]}$ is the bidual of $(T_X \otimes A^*)^{\otimes m}$.
Then one of the following holds:
\begin{enumerate}
\item There exists a normal projective variety $Y$ of dimension at most $n-1$
and a vector bundle $V$ on $Y$ such that
$X' := \PP(V)$
admits a birational morphism \holom{\mu}{X'}{X} such that $\sO_{X'}(\mu^* A) \simeq \sO_{\PP(V)}(1)$; or
\item $(X,  \sO_X(A)) \simeq (Q^n, \sO_{Q^n}(1))$ where $Q^n \subset \PP^{n+1}$ is a (not necessarily smooth) quadric hypersurface.
\end{enumerate}
\end{corollary}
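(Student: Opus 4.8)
The plan is to reduce this statement directly to the second part of Theorem~\ref{theoremmain}. The point is that the two alternatives (a) and (b) asserted in the corollary are literally the two alternatives produced by the second part of Theorem~\ref{theoremmain} under the hypothesis that $\Omega_X \otimes A$ fails to be generically ample with respect to $A$ (note that $(\PP^n,\sO_{\PP^n}(1))$ is absorbed into case (a), with $Y$ a point). So it suffices to prove the single implication
\[
H^0\bigl(X, (T_X \otimes A^*)^{[\otimes m]}\bigr) \neq 0 \quad \Longrightarrow \quad \Omega_X \otimes A \text{ is not generically ample with respect to } A ,
\]
and no further geometric analysis of $X$ is needed.

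To prove that implication I would argue by contradiction. Fix a nonzero section $\sigma$ of $(T_X \otimes A^*)^{[\otimes m]}$. On the smooth locus $X_{\mathrm{sm}}$ the canonical map $(T_X \otimes A^*)^{\otimes m} \to (T_X \otimes A^*)^{[\otimes m]}$ is an isomorphism, and since $X$ is integral with $X_{\mathrm{sm}}$ dense, $\sigma$ restricts to a nonzero section of $(T_X \otimes A^*)^{\otimes m}$ over $X_{\mathrm{sm}}$; let $Z \subsetneq X$ be its zero locus. Next choose a general complete intersection curve $C = H_1 \cap \cdots \cap H_{n-1}$ with $H_i \in |kA|$ and $k \gg 0$ (for $n=2$ a general member of $|kA|$, for $n=1$ simply $C = X$). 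As $X$ is normal, a general such $C$ is a smooth irreducible curve contained in $X_{\mathrm{sm}}$ and not contained in $Z$, so $\sigma$ restricts to a nonzero section of $(T_X \otimes A^*)^{\otimes m}|_C = \bigl((\Omega_X \otimes A)|_C\bigr)^{*\,\otimes m}$. On the other hand, if $\Omega_X \otimes A$ were generically ample with respect to $A$, then $(\Omega_X \otimes A)|_C$ is an ample vector bundle on $C$ for the general complete intersection curve of sufficiently high degree (this is the content of generic ampleness, via the Mehta--Ramanathan restriction theorem); since $m \geq 1$, $\bigl((\Omega_X \otimes A)|_C\bigr)^{\otimes m}$ is then ample, so its dual $(T_X \otimes A^*)^{\otimes m}|_C$ has all Harder--Narasimhan slopes negative and hence $H^0\bigl(C, (T_X \otimes A^*)^{\otimes m}|_C\bigr) = 0$ — contradicting $\sigma|_C \neq 0$. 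Thus $\Omega_X \otimes A$ is not generically ample with respect to $A$, and the second part of Theorem~\ref{theoremmain} yields exactly alternatives (a) and (b).

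I do not expect a serious obstacle here: the real work is already contained in Theorem~\ref{theoremmain}, and what remains is bookkeeping — identifying the reflexive tensor power with the ordinary one along a general curve, choosing a general complete intersection curve simultaneously of large enough degree for the restriction statement, lying inside $X_{\mathrm{sm}}$, and transverse to $Z$, and interpreting "general complete intersection curve" correctly in the degenerate cases $n \in \{1,2\}$. The only mildly delicate point is this simultaneous genericity of the curve, which is standard.
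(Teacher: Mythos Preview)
Your proposal is correct and follows essentially the same route as the paper: both argue by contradiction that if $\Omega_X \otimes A$ were generically ample then the dual of its $m$-th tensor power could have no nonzero section, and then invoke the second part of Theorem~\ref{theoremmain}. The paper dispatches the key step in one line by citing \cite[Cor.~6.1.16]{Laz04} (tensor powers of a generically ample sheaf remain generically ample), whereas you unpack the argument by restricting to a MR-general curve in the smooth locus; your version is slightly more explicit about why the reflexive hull causes no trouble, but the substance is identical.
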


Indeed if $\Omega_X \otimes A$ is generically ample, then $(\Omega_X \otimes A)^{\otimes m}$ is generically ample
for every positive $m \in \N$ by
\cite[Cor.6.1.16]{Laz04}. In particular its dual does not have any non-zero global section, in contradiction to the assumption.
Thus the second part of Theorem \ref{theoremmain} applies.

{\bf Acknowledgements.} I would like to thank Fr\'ed\'eric Han for patiently answering
my questions about Schur functors. The author was partially supported by the A.N.R. project ``CLASS''.

\section{Notation} \label{sectionnotation}

We work over the complex numbers, topological notions always refer to the Zariski topology.
For general definitions we refer to \cite{Har77} and \cite{Laz04}.
We will frequently use the terminology and results 
of the minimal model program (MMP) as explained in \cite{KM98} or \cite{Deb01}.
For some standard definitions concerning the adjunction theory of (quasi-)polarised varieties we
refer to \cite{Fuj89, BS95, a12}.

\begin{definition} \label{definitionmrgeneral}
Let $X$ be a normal projective variety of dimension $n$, and let $H_\bullet$ be a multipolarisation, that is
$H_\bullet = \{ H_1, \ldots, H_{n-1} \}$ is a collection of $n-1$ ample Cartier divisors.
A MR-general curve $C \subset X$ is an intersection
\[
D_1 \cap \ldots \cap D_{n-1}
\]
for general $D_j \in | m_j H_j |$ where $m_j \gg 0$. 
\end{definition}

The abbreviation MR stands for Mehta-Ramanathan, alluding to the well-known fact \cite{MR82}
that the Harder-Narasimhan filtration of a torsion-free sheaf commutes with restriction to a MR-general curve. 

Let $X$ be a normal projective variety, and let  $\sF$ be a coherent sheaf that is locally free in codimension one.
A MR-general curve $C$ is contained in the open set 
where $X$ is smooth and $\sF$ is locally free, in particular  $\sF|_C$ is a vector bundle.

\begin{definition} \label{definitiongenericallynef}
Let $X$ be a normal projective variety of dimension $n$, and let 
$\sF$   be a coherent sheaf on $X$ that is locally free in codimension one.
The sheaf $\sF$ 
is generically nef (resp. ample) with respect to a multipolarisation $H_\bullet$
if its restriction to a MR-general curve $C$ is a nef (resp. ample) vector bundle.
\end{definition}

\begin{remark} \label{remarkmrgeneral}
Since the Harder-Narasimhan filtration of a torsion free sheaf $\sF$ commutes with restriction to $C$,
the sheaf $\sF$ is generically nef with respect to $H_\bullet$ if and only if
$$
\mu_{H_\bullet}(\sF_k/\sF_{k-1}) \geq 0,
$$
where $\sF_k/\sF_{k-1}$ is the last graded piece of the Harder-Narasimhan filtration and $\mu_{H_\bullet}(.)$ the slope with respect to $H_\bullet$.
\end{remark}

The following definition is a slight modification of the definition of a generalised cone \cite[1.1.8]{BS95}: 

\begin{definition} \label{definitionconestructure}
Let $F$ be a normal projective variety, and let $A$ be an ample Cartier divisor on $F$. We say that $F$, or more precisely the polarised variety $(F, \sO_F(A))$,  has a cone structure if there exists a normal projective variety $G$,
an ample vector bundle $M$ on $G$, and a positive $d \in \N$ such that
the projectivised vector bundle $F' := \PP(M \oplus \sO_{G}^{\oplus d})$ admits
a birational morphism  $\holom{\mu}{\PP(M \oplus \sO_{G}^{\oplus d})}{F}$ such that $\sO_{F'}(\mu^* A) \simeq \sO_{\PP(M \oplus \sO_{G}^{\oplus d})}(1)$.
\end{definition}

Cone structures appear naturally in the classification of normal projective varieties such that $K_X$
is not necessarily $\Q$-Cartier:

\begin{lemma} \label{lemmaadjunction}
Let $F$ be a normal projective variety of dimension $r$ that is rationally connected, and let $A$ be an ample Cartier divisor on $F$.
Let \holom{\sigma}{\tilde F}{F} be a desingularisation. Suppose that we have
$$
\kappa(\tilde F, K_{\tilde F}+j \sigma^* A)= - \infty
$$
for every $j \in \Q$ such that $0 \leq j <r$. Then the polarised variety $(F, \sO_F(A))$ is isomorphic 
to one of the following varieties:
\begin{enumerate}
\item $(\PP^r, \sO_{\PP^r}(1))$; or 
\item $(Q^r, \sO_{Q^r}(1))$ where $Q^r \subset \PP^{r+1}$ is a normal quadric hypersurface; or
\item $(\PP(V), \sO_{\PP(V)}(1))$ where $V$ is an ample vector bundle on $\PP^1$; or
\item $F$ has a cone structure given by some nef and big vector bundle on $\PP^1$.
\end{enumerate}
If we have $\kappa(\tilde F, K_{\tilde F}+r \sigma^* A)= - \infty$,
then $(F, \sO_F(A))$ is isomorphic  to $(\PP^r, \sO_{\PP^r}(1))$.
\end{lemma}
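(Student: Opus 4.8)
The plan is to run an adjunction-theoretic induction on $r$, using the hypothesis $\kappa(\tilde F, K_{\tilde F}+j\sigma^*A)=-\infty$ for $0\le j<r$ together with the standard structure theory of polarised varieties (as in \cite{Fuj89, BS95, a12}). First I would treat the extreme statement: if $\kappa(\tilde F, K_{\tilde F}+r\sigma^*A)=-\infty$, then in particular $K_{\tilde F}+(r-\varepsilon)\sigma^*A$ has Kodaira dimension $-\infty$ for all small $\varepsilon>0$, so the nef value (unnormalised) of the pair $(\tilde F,\sigma^*A)$ is at least $r+1$; but a rationally connected variety of dimension $r$ with an ample Cartier divisor $A$ and nef value $\tau\ge r+1$ must, by the Kobayashi--Ochiai theorem \cite{KO73} applied on the desingularisation together with the fact that $A$ is Cartier on the normal variety $F$, be $(\PP^r,\sO(1))$ — here one needs to push the conclusion down from $\tilde F$ to $F$, which works because $\sigma$ is birational, $A$ is already Cartier, and $(\PP^r,\sO(1))$ is its own unique minimal model in the relevant sense. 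The case $\tau=r+1$ exactly gives the quadric.

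For the main dichotomy I would induct on $r$, the case $r=1$ being $F=\PP^1$. Assuming $\kappa(\tilde F, K_{\tilde F}+j\sigma^*A)=-\infty$ for $0\le j<r$, the value $j=r-1$ forces, via the first reduction step, that $(F,A)$ has nef value $\ge r$: either it is $(\PP^r,\sO(1))$ or $(Q^r,\sO(1))$ (both of which appear in the list and are done), or the nef value morphism $\phi\colon F\dashrightarrow G$ of $(F,A)$ is a nontrivial fibration. In the latter case one is in a classical adjunction situation: either $\phi$ exhibits $F$ as a (generically) $\PP^{r-d}$-bundle-like structure over a lower-dimensional base — and since $A$ restricts to $\sO(1)$ on the general fibre $\PP^{r-d}$, a contraction/cone-structure argument (of the type used to prove \cite[1.1.8]{BS95}, and reproved in the relevant literature on normal polarised varieties) shows $F$ has a cone structure, the base being $G$ — or $\dim G<r$ with general fibre of positive dimension but not projective space, which cannot happen because on the general fibre all the vanishing hypotheses descend and induction applies, forcing the fibre into the list, and a fibre equal to a quadric or a $\PP(V)$ over $\PP^1$ contradicts the Cartier/nef-value bookkeeping unless $\dim G\le 1$. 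When $\dim G=1$, rational connectedness of $F$ forces $G=\PP^1$, and the nef value morphism together with the vanishing hypotheses up to $j<r$ gives exactly case (c), $F\simeq\PP(V)$ with $V$ an ample bundle on $\PP^1$, or case (d), a cone structure from a nef and big (but not ample) bundle on $\PP^1$ — the distinction being whether $\phi$ is a morphism or only a birational contraction of a section.

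The technical heart, and the step I expect to be the main obstacle, is controlling the adjunction/nef-value apparatus on the \emph{singular} variety $F$ — in particular checking that the nef value morphism and the generically-$\PP^k$-bundle conclusions survive when $K_F$ is not $\Q$-Cartier, so that all statements about Kodaira dimension must be read on the desingularisation $\tilde F$ while all statements about the polarisation must stay on $F$ (where $A$ is Cartier). One must be careful that pulling back $A$ under $\sigma$ and under the fibration does not introduce spurious base loci, that rational connectedness is inherited by general fibres, and that the final identification "$F'=\PP(M\oplus\sO_G^{\oplus d})\to F$ birational with $\sO_{F'}(\mu^*A)=\sO(1)$" genuinely matches Definition \ref{definitionconestructure}; this last point is essentially the content of Wahl's theorem \cite{Wah83, Dru04} in the rank-one case and its higher-rank extension, so I would quote those structural results rather than reprove them. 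I would assemble the proof so that the birational bookkeeping is done once, at the level of the nef value morphism of $(F,\sO_F(A))$, and then each of the four outcomes is read off from the dimension of its target and the behaviour of the general fibre.
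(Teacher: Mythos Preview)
Your outline has the right adjunction-theoretic flavour but contains a genuine gap at exactly the point you flag as ``the main obstacle'': you repeatedly invoke the nef value and the nef value morphism of $(F,\sO_F(A))$, yet these are only defined when $K_F$ is $\Q$-Cartier, which is explicitly \emph{not} assumed here. Saying ``one must be careful'' and ``all statements about Kodaira dimension must be read on $\tilde F$'' does not constitute a workaround --- the nef value morphism simply does not exist on $F$, and on $\tilde F$ the divisor $\sigma^*A$ is only nef and big, so the standard adjunction machinery (Kobayashi--Ochiai, first reduction, etc.) does not apply directly either. Your proposed induction on $r$ with restriction to fibres inherits the same problem, and the sentence ``a fibre equal to a quadric or a $\PP(V)$ over $\PP^1$ contradicts the Cartier/nef-value bookkeeping unless $\dim G\le 1$'' is not an argument.

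The paper resolves this by working entirely on the smooth variety $\tilde F$ with the \emph{quasi}-polarisation $\sigma^*A$. By the nonvanishing theorem the hypothesis forces $K_{\tilde F}+j\sigma^*A$ to be non-pseudoeffective for $0\le j<r$, and one runs a $K_{\tilde F}+(r-\varepsilon)\sigma^*A$-MMP (in the sense of \cite{a12,And13}). The key point --- which replaces your missing step --- is that every birational extremal ray $\Gamma$ in this MMP satisfies $\sigma^*A\cdot\Gamma=0$ by \cite[Prop.~3.6]{And13}, so by rigidity each birational contraction factors through $\sigma$ and the end product $\tilde F_s$ still admits a birational morphism $\nu\colon\tilde F_s\to F$ with $A_s\simeq\nu^*A$. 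One is then left with a terminal quasi-polarised variety $(\tilde F_s,A_s)$ carrying a fibre-type contraction with $K_{\tilde F_s}+(r-\varepsilon)A_s$ antiample on fibres; the classification \cite[Prop.~3.5]{And13}, \cite[Table 7.1]{BS95} gives exactly $(\PP^r,\sO(1))$, $(Q^r,\sO(1))$, or a $(\PP^{r-1},\sO(1))$-bundle over $\PP^1$. In the last case $\tilde F_s\simeq\PP(V)$ with $V$ nef and big, and $\nu$ is an isomorphism precisely when $V$ is ample --- this is where cases (c) and (d) split. No induction on $r$ is needed, and the base of the scroll/cone is forced to be $\PP^1$ by the classification itself rather than by rational connectedness.
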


\begin{proof}
By the nonvanishing theorem \cite[Thm.D.]{BCHM10} we know that 
$K_{\tilde F}+j \sigma^* A$ is not pseudoeffective for every $0 \leq j<r$.
Using the terminology of \cite{a12, And13} we will run a $K_{\tilde F}+(r-\varepsilon) \sigma^* A$-MMP 
$$
\tilde F:= \tilde F_0 \dashrightarrow \tilde F_1 \dashrightarrow \ldots \dashrightarrow \tilde F_s 
$$
where $0<\varepsilon \ll \frac{1}{2}$.
Its outcome is a quasi-polarised variety $(\tilde F_{s}, \sO_{\tilde F_{s}}(A_s))$ 
with terminal singularities admitting an elementary contraction of fibre type \holom{\psi}{\tilde F_s}{G}
such that
$K_{\tilde F_s}+(r-\varepsilon)A_s$ is $\psi$-antiample.
Let $\Gamma$ be the extremal ray contracted by the first step of this MMP. If the corresponding
elementary contraction is birational, we know by \cite[Prop.3.6]{And13} that $\sigma^* A \cdot \Gamma=0$.
Since $A$ is ample this implies that every fibre of the extremal contraction
is contained in a $\sigma$-fibre. Thus by the rigidity lemma there exists a morphism $\tilde F_1 \rightarrow F$.
Arguing inductively (cf. the proof of \cite[Prop.1.3]{a12}) we see that there exists
a birational morphism $\nu: \tilde F_s \rightarrow F$ such that $A_s \simeq \nu^* A$.
Note that $\nu$ is an isomorphism if and only if $A_s$ is ample.
 
By \cite[Prop.3.5]{And13}, \cite[Table 7.1]{BS95} we know that $(\tilde F_{s}, \sO_{\tilde F_{s}}(A_s))$ is one of the following quasi-polarised varieties: 
\begin{enumerate}
\item $(\PP^r, \sO_{\PP^r}(1))$; or
\item $(Q^r, \sO_{Q^r}(1))$ where $Q^r \subset \PP^{r+1}$ is a hyperquadric; or
\item a $(\PP^{r-1}, \sO_{\PP^{r-1}}(1))$ bundle over $\PP^1$.
\end{enumerate}
The first two cases correspond to the first two cases in the statement. In the third case 
we have $\tilde F_s \simeq \PP(V:=\psi_* \sO_{\tilde F_s}(A_s))$ and $V$ is nef and big.
If $V$ is not ample, $F$ has a cone structure, otherwise $\tilde F_s \simeq F$ is a projective bundle.
This proves the first statement, the second statement is an immediate consequence of the classification
obtained in the first part. 
\end{proof}

\begin{lemma} \label{lemmaquadricbundle}
Let $X_C$ be a normal projective variety of dimension $r+1$, and let $A$ be a Cartier divisor on $X_C$.
Let \holom{p_C}{X_C}{C} be a fibration onto a smooth curve $C$ such that $A$ is $p_C$-ample.
Suppose moreover that the general fibre $(F, \sO_F(A))$ is isomorphic to $(Q^r, \sO_{Q^r}(1))$ 
where $Q^r \subset \PP^{r+1}$ is a quadric hypersurface.

Then $X_C \rightarrow C$ is a quadric bundle, i.e. the variety $X_C$ has at most canonical singularities and there exists a Cartier divisor $M$ on $C$ such that
\begin{equation} \label{eqnquadricbundle}
K_{X_C/C}+r A \simeq p_C^* M.
\end{equation}
\end{lemma}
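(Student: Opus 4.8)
The plan is to first establish the statement about canonical singularities, and then derive the relative adjunction formula \eqref{eqnquadricbundle} by studying the behaviour of $K_{X_C/C}+rA$ on the fibres. Since the general fibre is a quadric $Q^r$, which has canonical (indeed terminal for $r\ge 3$, and in any case Gorenstein) singularities, and since $C$ is a smooth curve, the variety $X_C$ is Gorenstein in a neighbourhood of the general fibre; the total space has at worst canonical singularities because a general fibre of $p_C$ is canonical and the base is smooth, so one can invoke inversion of adjunction (or the standard fact that a fibration over a smooth curve whose general fibre has canonical singularities has canonical singularities after possibly shrinking, combined with the fact that the special fibres contribute only finitely many components). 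To make this rigorous I would take a desingularisation $\sigma\colon \tilde X_C \to X_C$ and compare discrepancies: on the general fibre $F=Q^r$ we have $\kappa(\tilde F, K_{\tilde F}+j\sigma^*A) = -\infty$ for $0\le j<r$, so Lemma \ref{lemmaadjunction} applies fibrewise and shows $(F,\sO_F(A))\simeq(Q^r,\sO_{Q^r}(1))$ is forced, confirming the fibre type is stable.

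Next, for the adjunction formula, consider the $\Q$-divisor class $L := K_{X_C/C}+rA = K_{X_C} - p_C^* K_C + rA$ on $X_C$. On a general fibre $F\simeq Q^r\subset\PP^{r+1}$ the adjunction formula gives $K_F = \sO_F(-r)$, i.e.\ $(K_{X_C/C}+rA)|_F \simeq \sO_F$ is trivial. Thus $L$ is numerically trivial on the general fibre, hence on every fibre (fibres of a fibration are numerically equivalent up to rational coefficients, and $L\cdot \Gamma = 0$ for the general fibre class $\Gamma$ forces $L$ to be $p_C$-numerically trivial). A $\Q$-Cartier divisor that is numerically trivial on all fibres of a fibration over a curve is the pullback of a $\Q$-divisor from $C$; to upgrade this to a genuine Cartier divisor $M$ with $L\simeq p_C^*M$ (linear, not just numerical, equivalence) I would argue that $p_{C*}\sO_{X_C}(L)$ is a line bundle on $C$ — using that $L|_F\simeq\sO_F$ and $h^0(F,\sO_F)=1$, so $p_{C*}\sO_{X_C}(L)$ has rank one and is torsion-free on the smooth curve $C$, hence invertible — and that the natural evaluation map $p_C^*p_{C*}\sO_{X_C}(L)\to\sO_{X_C}(L)$ is an isomorphism by checking it fibrewise. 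This yields $\sO_{X_C}(L)\simeq p_C^*M$ with $M := p_{C*}\sO_{X_C}(L)$, which is \eqref{eqnquadricbundle}.

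The main obstacle I anticipate is the passage from numerical to linear equivalence in the presence of possibly bad special fibres: the evaluation map $p_C^*p_{C*}\sO_{X_C}(L)\to\sO_{X_C}(L)$ could in principle fail to be an isomorphism along a reducible or non-reduced special fibre, or $L$ might not even be Cartier there before we know $X_C$ is Gorenstein. I would handle this by first confirming that $X_C$ is Gorenstein with canonical singularities (so $K_{X_C}$, and hence $L$, is Cartier), then noting that $L$ is $p_C$-nef and $p_C$-numerically trivial, so $L - p_C^*M'$ for a suitable $M'$ on $C$ is trivial on every fibre; semicontinuity plus the fact that $h^0$ of the trivial bundle on each (possibly non-reduced) fibre equals $1$ — which holds because the fibres of a fibration with rationally connected, hence $H^1(\sO)=0$, general fibre have connected fibres with $h^0(\sO)=1$ — gives that the direct image is a line bundle and the evaluation map is surjective, hence an isomorphism. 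A secondary subtlety is ensuring the relative canonical sheaf behaves well, i.e.\ that $\omega_{X_C/C}\simeq\omega_{X_C}\otimes p_C^*\omega_C^{-1}$ holds as an honest line bundle identity, which follows once $p_C$ is flat (automatic over a smooth curve for a fibration with equidimensional fibres, or after removing the finitely many non-flat points which are then filled back in by normality and the $S_2$ property).
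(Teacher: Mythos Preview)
Your proposal has two genuine gaps, both concerning the special fibres of $p_C$.

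First, the argument for canonical singularities does not work. Knowing that the general fibre is a normal quadric (hence canonical) gives no control over the singularities of $X_C$ along special fibres; there is no form of inversion of adjunction that propagates canonical singularities from a general fibre to the total space. Your desingularisation-and-discrepancy sketch does not fill this in: discrepancies over the generic point of $C$ say nothing about what happens over the finitely many bad points, and the remark that ``special fibres contribute only finitely many components'' is irrelevant to whether those components carry non-canonical points. More concretely, before this step you do not even know that $K_{X_C}$ is $\Q$-Cartier, so the divisor $L=K_{X_C/C}+rA$ you manipulate afterwards is not yet defined as a $\Q$-Cartier class.

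Second, the step ``$L$ numerically trivial on the general fibre, hence on every fibre'' is false as stated. If a special fibre is reducible, $L$ can have nonzero intersection with individual components even though it is trivial on the general fibre (think of a pencil of conics degenerating to two lines, with $L$ of bidegree $(1,-1)$ on the special fibre). You later assert in passing that $L$ is $p_C$-nef, but give no argument; this relative nefness is precisely the substantive point, and your direct-image/evaluation-map argument for linear equivalence presupposes it.

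The paper addresses both issues at once, and in the opposite logical order from yours. It passes to the canonical modification $\nu\colon \tilde X_C \to X_C$, which has canonical singularities by construction (so $K_{\tilde X_C}$ is $\Q$-Cartier and $\nu$-ample) and agrees with $X_C$ over the generic point of $C$. A short MMP argument, using \cite[Thm.2.1]{And95} to bound fibre dimensions of a hypothetical $(K_{\tilde X_C}+r\nu^*A)$-negative birational contraction, shows that $K_{\tilde X_C}+r\nu^*A$ is relatively nef over $C$. Only with nefness in hand does Zariski's lemma give $K_{\tilde X_C}+r\nu^*A\simeq (p_C\circ\nu)^*M$. In particular $K_{\tilde X_C}$ is $\nu$-trivial; since it is also $\nu$-ample, $\nu$ is an isomorphism. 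Canonical singularities of $X_C$ therefore come out at the \emph{end} of the argument, as a consequence of the pullback formula on the modification, not as an input at the beginning.
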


\begin{remark*} In the situation above
every fibre of the fibration $X_C \rightarrow C$ is isomorphic to a hyperquadric:
for integral fibres this is proven in \cite[Cor.5.5]{Fuj75}, for non-integral fibres the details are 
tedious and left to the interested reader.
\end{remark*}

\begin{proof} Note first that up to replacing $A$ by $A+p_C^* D$ for $D$ a sufficiently ample divisor
on $C$ we can suppose without loss of generality that $A$ is ample.
Let $\holom{\nu}{\tilde X_C}{X_C}$ be the canonical modification of $X_C$, that is $\tilde X_C$
is the unique normal projective variety with at most canonical singularities such that
$K_{\tilde X_C}$ is $\nu$-ample\footnote{The existence of the canonical modification is a consequence
of \cite{BCHM10}, cf. the forthcoming book \cite{Kol13}.}.
Since a normal quadric has at most canonical singularities, we see that $\nu$ is an isomorphism
over the generic point of $C$. Thus the restriction of $K_{\tilde X_C}+r \nu^* A$
to the general fibre of $p_C \circ \nu$ is trivial. 

Suppose first that $K_{\tilde X_C}+r \nu^* A$ is not 
$(p_C \circ \nu)$-nef. Then there exists a Mori contraction \holom{\psi}{\tilde X_C}{X_C'}
contracting an extremal ray $\Gamma$ such that $(K_{\tilde X_C}+r \nu^* A) \cdot \Gamma<0$.
Since $K_{\tilde X_C}+r \nu^* A$ is $(p_C \circ \nu)$-pseudoeffective, the contraction $\psi$ is birational.
Since $(K_{\tilde X_C}+r \nu^* A) \cdot \Gamma<0$ we know by \cite[Thm.2.1, II,i]{And95} 
that all the $\psi$-fibres have dimension strictly larger than $r$ unless $\nu^* A \cdot \Gamma=0$. 
Since $\dim X_C=r+1$ we see that $\nu^* A \cdot \Gamma=0$. The divisor
$A$ being ample this implies that the $\psi$-fibres are contained in the $\nu$-fibres. Yet $K_{\tilde X_C}$ is $\nu$-ample, a contradiction.

Thus $K_{\tilde X_C}+r \nu^* A$ is $(p_C \circ \nu)$-nef and trivial on the general fibre.
By a well-known application of Zariski's lemma \cite[Lemma 8.2]{BHPV04} this implies that
$$
K_{\tilde X_C}+r \nu^* A \simeq \nu^* p_C^* M
$$
for some Cartier divisor $M$ on $C$. In particular $K_{\tilde X_C}$ is $\nu$-trivial.
Yet $K_{\tilde X_C}$ is $\nu$-ample, so $\nu$ is an isomorphism.
\end{proof}

\section{The twisted cotangent sheaf} \label{sectiontwistedcotangent}

The setup of the proof of Theorem \ref{theoremmain} is analogous \cite[Thm.3.1]{a11}:

\begin{setup} \label{setup}
Let $X$ be a normal projective variety of dimension $n$, and let $A$ be an ample Cartier divisor on $X$.
Let $H_\bullet := (H_1, \ldots, H_{n-1})$ be a multipolarisation on $X$.
Denote by $T_X:=\Omega_X^*$ the tangent sheaf of $X$,
and let 
\[
0= \sF_0 \subsetneq \sF_1 \subsetneq \ldots \subsetneq \sF_k=T_X
\]
be the Harder-Narasimhan filtration of $T_X$ with respect to $H_\bullet$. 
Then for every $i \in \{ 1, \ldots, k \}$, the graded pieces $\sG_i:=\sF_i/\sF_{i-1}$ are semistable torsion-free sheaves
and if $\mu_{H_\bullet}(\sG_i)$ denotes the slope, we have a strictly decreasing sequence
\begin{equation} \label{eqnslopedecreases}
\mu_{H_\bullet}(\sG_1) > \mu_{H_\bullet}(\sG_2) > \ldots > \mu_{H_\bullet}(\sG_k).
\end{equation}
Since twisting with a Cartier divisor does not change the stability properties of a torsion-free
sheaf, the Harder-Narasimhan filtration of $T_X \otimes A^*$ is 
\[
0= \sF_0 \otimes A^* \subsetneq \sF_1\otimes A^* \subsetneq \ldots \subsetneq \sF_k \otimes A^*=T_X \otimes A^*
\]
with graded pieces $\sG_i \otimes A^*$ and slopes
\[
\mu_{H_\bullet}(\sG_i \otimes A^*) = \mu_{H_\bullet}(\sG_i)-A \cdot \prod_{i=1}^{n-1} H_i.
\]
Since stability is invariant under replacing the multipolarisation $H_\bullet$ by some positive multiple, we can suppose that 
the polarisations $H_i$ are very ample 
and $$
C :=D_1 \cap \ldots \cap D_{n-1}
$$
with $D_i \in |H_i|$ is a MR-general curve.

{\em Suppose now that $\Omega_X \otimes A$ is not generically ample with respect to
the multipolarisation $H_\bullet$, i.e. suppose that we have $\mu_{H_\bullet}(\sF_1 \otimes A^*) \geq 0$.} 

Fix a $l \in \{ 1, \ldots, k \}$ 
such that $\mu_{H_\bullet}(\sG_l \otimes A^*) \geq 0$. 
By the Mehta-Ramanathan theorem \cite[Thm.6.1]{MR82} the Harder-Narasimhan filtration commutes with restriction to $C$, 
so for $i \in \{ 1, \ldots, l \}$, the vector bundles $(\sG_i \otimes A^*)|_C$ are semistable with non-negative slope, hence 
nef. Since $A$ is ample, the vector bundles $(\sG_i)|_C$ are ample. Thus $\sF_l|_C$ is ample and 
we have 
\begin{equation} \label{eqnslopes}
\mu_{H_\bullet}(\sF_l \otimes A^*) = \sum_{i=1}^l \frac{\rk \sG_i}{\rk \sF_l} \mu_{H_\bullet}(\sG_i \otimes A^*) \geq 0. 
\end{equation}
Note that by \eqref{eqnslopedecreases} we have equality if and only if $l=1$ and $\mu_{H_\bullet}(\sF_1 \otimes A^*)=0$. 
We know by
standard arguments in stability theory \cite[p.61ff]{MP97} 
that $\sF_l$ is integrable, moreover the MR-general curve $C$ does not meet the singular
locus of the foliation.
Thus we can apply the
Bogomolov-McQuillan theorem \cite[Thm.0.1]{BM01}, \cite[Thm.1]{KST07} 
to see that the closure of a $\sF_l$-leaf
through a generic point of $C$ is algebraic and rationally connected. 
Since $C$ moves in a covering family the  general $\sF_l$-leaf is algebraic with rationally connected closure. 

We set $r:= \rk \sF_l$. If $\chow{X}$ denotes the Chow variety of $X$, we get a rational map $X \dashrightarrow \chow{X}$ that sends a general point $x \in X$ to the closure of the unique leaf through $x$.
 Let $Y$ be the normalisation of the closure of the image,
and let $X'$ be the normalisation of the universal family over $Y$. 
By construction the natural map \holom{\mu}{X'}{X} is birational and the fibration  $\holom{\varphi}{X'}{Y}$
is equidimensional of dimension $r$, the general fibre being the normalisation of the closure of a general $\sF_l$-leaf.
The restriction of $\mu$ to every $\varphi$-fibre is finite, so $\mu^* A$ is $\varphi$-ample.

Let $F$ be a general $\varphi$-fibre. The following lemma describes $F$:
\end{setup}

\begin{lemma} \label{lemmaidentifyfibres}
In the situation of Setup \ref{setup}, the polarised variety 
$(F, \sO_F(\mu^* A))$
is isomorphic to one of the following varieties:
\begin{enumerate}
\item $(\PP^r, \sO_{\PP^r}(1))$; or 
\item $(Q^r, \sO_{Q^r}(1))$ where $Q^r \subset \PP^{r+1}$ is a normal quadric hypersurface; or
\item $(\PP(V), \sO_{\PP(V)}(1))$ where $V$ is an ample vector bundle on $\PP^1$; or
\item $F$ has a cone structure over $\PP^1$ (cf. Definition \ref{definitionconestructure}).
\end{enumerate}
If $(F, \sO_F(\mu^* A)) \not\simeq (\PP^r, \sO_{\PP^r}(1))$, we have
$l=1$ and $\mu_{H_\bullet}(\sF_1 \otimes A^*)=0$.
\end{lemma}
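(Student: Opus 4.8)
The plan is to identify the general fibre $F$ by applying Lemma \ref{lemmaadjunction} to the polarised variety $(F, \sO_F(\mu^* A))$, so the main task is to verify the numerical hypothesis of that lemma: if $\sigma: \tilde F \to F$ is a desingularisation, then $\kappa(\tilde F, K_{\tilde F} + j \sigma^* \mu^* A) = -\infty$ for all $0 \le j < r$, with the stronger vanishing at $j = r$ precisely when $(F,\sO_F(\mu^*A))$ is not the top case. First I would recall from Setup \ref{setup} that the general $\sF_l$-leaf is algebraic with rationally connected closure, so $F$ (being the normalisation of such a closure) is rationally connected, and $\mu^* A$ is $\varphi$-ample hence its restriction to $F$ is ample; this gives the standing hypotheses of Lemma \ref{lemmaadjunction}.

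The heart of the argument is to relate the positivity of $K_{\tilde F} + j \sigma^* \mu^* A$ to the slope inequality \eqref{eqnslopes}. The point is that, over the smooth locus, the relative tangent sheaf $T_{X'/Y}$ restricted to a general fibre is $T_F$, and its determinant is $-K_F$; on the other hand $T_{X'/Y} = \mu^* \sF_l$ generically (the fibres of $\varphi$ are the leaves), so $\det \sF_l$ pulls back to a divisor numerically equivalent to $-K_{X'/Y}$ up to $\mu$- and $\varphi$-exceptional contributions. Restricting to an MR-general curve $C$ — which misses the foliation's singular locus and over which $\mu$ is an isomorphism onto its image — the slope computation \eqref{eqnslopes} says $\deg(\det \sF_l|_C) \ge r \, A \cdot C$, i.e. $-K_{X'/Y} \cdot \tilde C \ge r \, \mu^* A \cdot \tilde C$ for the corresponding curve $\tilde C \subset X'$. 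Since $\tilde C$ is a general member of a covering family of curves sweeping out a general fibre $F$, this forces $K_F + r \sigma^* \mu^* A$ (more precisely its pullback to $\tilde F$, corrected for the desingularisation) to be non-pseudoeffective — indeed anti-nef along a covering family — hence $\kappa(\tilde F, K_{\tilde F} + j \sigma^* \mu^* A) = -\infty$ for every $0 \le j \le r$. When the inequality \eqref{eqnslopes} is strict, i.e. when we are not in the case $l = 1$ and $\mu_{H_\bullet}(\sF_1 \otimes A^*) = 0$, the same computation gives $-K_{X'/Y} \cdot \tilde C > r\, \mu^* A \cdot \tilde C$, so even $K_{\tilde F} + r \sigma^* \mu^* A$ has Kodaira dimension $-\infty$; by the last sentence of Lemma \ref{lemmaadjunction} this means $(F, \sO_F(\mu^* A)) \simeq (\PP^r, \sO_{\PP^r}(1))$. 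Contrapositively, if $F$ is not this projective space then \eqref{eqnslopes} must be an equality, which by the remark following \eqref{eqnslopedecreases} happens exactly when $l = 1$ and $\mu_{H_\bullet}(\sF_1 \otimes A^*) = 0$ — the final assertion.

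The main obstacle is the bookkeeping of exceptional divisors: passing from the slope inequality on $X$ (or on $C$) to a statement about $K_{\tilde F}$ on the desingularisation $\tilde F$ of a general fibre requires controlling (i) the difference between $K_{X'/Y}|_F$ and $K_F$ coming from the fact that $X'$ and $F$ may be singular, (ii) the $\mu$-exceptional locus, which is disjoint from the general MR-curve $C$ by construction, and (iii) the $\varphi$-exceptional behaviour. The cleanest route is to argue numerically along the covering family of curves $\tilde C$ rather than with divisor classes: because $C$ avoids all the bad loci and $\mu|_C$ is an isomorphism, one has exact identifications of degrees on $C$, and a covering family of curves with $(K_{\tilde F} + j \sigma^* \mu^* A) \cdot \tilde C \le 0$ (strict for $j = r$ in the non-equality case) immediately yields the pseudoeffectivity failure via the standard fact that a pseudoeffective divisor has non-negative degree on a general member of a covering family. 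I would also invoke, as in \cite{a11}, the comparison with the relative canonical divisor $K_{X'/Y} + r \mu^* A$ to package (i)--(iii) uniformly, citing the relevant computation there rather than redoing it.
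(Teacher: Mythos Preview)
Your overall plan --- verify the Kodaira--dimension hypothesis of Lemma~\ref{lemmaadjunction} and then read off the classification --- is exactly the paper's strategy. The final assertion via the equality case of \eqref{eqnslopes} is also handled correctly.

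There is, however, a genuine gap in the middle paragraph. You write that ``$\tilde C$ is a general member of a covering family of curves sweeping out a general fibre $F$'', and then intersect $K_{\tilde F}+j\sigma^*\mu^*A$ with $\tilde C$. But the MR-general curve $C\subset X$, and its lift $\tilde C\subset X'$, are \emph{not} contained in any fibre of $\varphi$: the curve $\tilde C$ maps onto a curve in $Y$ and meets the generic fibre in finitely many points. So the intersection number $(K_{\tilde F}+j\,\sigma^*\mu^*A)\cdot \tilde C$ is not defined, and the ``covering family on $F$'' you invoke does not exist. The slope inequality \eqref{eqnslopes} is a statement on the total space $X$ (or $X'$), not on an individual fibre.

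The bridge you are missing is a semipositivity result for relative (log-)canonical divisors, and this is precisely what the citation of \cite[Thm.~3.1]{a11} supplies. The logic runs in the contrapositive direction to yours: suppose $\kappa(\tilde F, K_{\tilde F}+j\,(\sigma^*\mu^*A)|_{\tilde F})\geq 0$ for some $0\leq j<r$. Then Viehweg-type weak positivity of direct images shows that $K_{\tilde X/Y}+j\,\sigma^*\mu^*A$ is pseudoeffective on the total space $\tilde X$. Now one can legitimately intersect with the covering family of MR-curves $C$ (which do sweep out $X$) to obtain $(K_{\sF_l}+jA)\cdot C\geq 0$, i.e.\ $\mu_{H_\bullet}(\sF_l\otimes A^*)\leq (j-r)\,A\cdot\prod H_i/r<0$, contradicting \eqref{eqnslopes}. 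The same argument with $j=r$ gives the refinement when $\mu_{H_\bullet}(\sF_l\otimes A^*)>0$. Your closing remark about citing \cite{a11} is therefore on target, but it is not merely exceptional-divisor bookkeeping: the weak positivity step is the substantive input that turns a slope inequality on $X$ into a Kodaira-dimension statement on $F$.
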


\begin{proof}
Note that for every $0 < \varepsilon \ll 1$ the $\Q$-twisted\footnote{Cf. \cite[Ch.6.2]{Laz04} for the definition of $\Q$-twists.} 
cotangent sheaf $\Omega_X\hspace{-0.8ex}<\hspace{-0.8ex}(1-\varepsilon) A\hspace{-0.8ex}>$ is not generically nef.
If $\mu_{H_\bullet}(\sF_l \otimes A^*)>0$ the same holds for $\Omega_X\hspace{-0.8ex}<\hspace{-0.8ex}(1+\varepsilon) A\hspace{-0.8ex}>$.

Let $\sigma: \tilde X \rightarrow X'$ be a desingularisation, and $\tilde F$ a general fibre of the induced fibration
$\varphi \circ \sigma$. Then by \cite[Thm.3.1]{a11} (or rather its proof) we have
$$
\kappa(\tilde F, K_{\tilde F}+j (\sigma^* \mu^* H)|_{\tilde F})= - \infty
$$
for every $j \in \Q$ such that $0 \leq j <r$. 
Moreover if $\mu(\sF_l \otimes A^*)>0$ we also have $\kappa(\tilde F, K_{\tilde F}+r (\sigma^* \mu^* H)|_{\tilde F})= - \infty$.
The statement is now an immediate consequence of \eqref{eqnslopes} and Lemma \ref{lemmaadjunction}.
\end{proof}

The next propositions determine the structure of the fibre space $X' \rightarrow Y$:

\begin{proposition} \label{propositionlinearspaces}
Suppose that we are in the situation of Setup \ref{setup}.
\begin{enumerate}
\item Suppose that $(F, \sO_F(\mu^* A))$ is a linear projective space.
Then we have $X' \simeq \PP(V)$ where $V:= \varphi_* (\sO_{X'}(\mu^* A))$. 
\item Suppose that $(F, \sO_F(\mu^* A))$ is a $\PP^{r-1}$-bundle or has a cone structure.
Then there exists a normal projective variety $\tilde Y$ of dimension $n-r+1$
and a vector bundle $\tilde V$ on $\tilde Y$ such that
$\tilde X := \PP(\tilde V) \rightarrow \tilde Y$ admits a birational morphism \holom{\tilde \mu}{\tilde X}{X} 
such that $\sO_{\tilde X}(\tilde \mu^* A) \simeq \sO_{\PP(\tilde V)}(1)$.
\end{enumerate}
\end{proposition}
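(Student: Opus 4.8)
The plan is to build the projective bundle structure directly from the linear-system structure on the fibres, using the relative ampleness of $\mu^* A$ over $Y$ and the fact that the fibres are $\PP^r$'s (part a) or total spaces fibred over $\PP^1$ (part b).

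For part (a), set $V := \varphi_* \sO_{X'}(\mu^* A)$. Since $\mu^* A$ is $\varphi$-ample and its restriction to a general fibre $F \simeq \PP^r$ is $\sO_{\PP^r}(1)$, the relative evaluation map $\varphi^* V \to \sO_{X'}(\mu^* A)$ should be surjective and induce a morphism $X' \to \PP(V)$ over $Y$; on a general fibre this is the identity $\PP^r \to \PP(H^0(\PP^r,\sO(1))^*) = \PP^r$. The work is to upgrade this to an isomorphism. First I would check that $V$ is locally free of rank $r+1$: flatness of $\sO_{X'}(\mu^* A)$ over the smooth locus of $Y$ together with $h^0(F, \sO_F(\mu^* A)) = r+1$ and $h^i(F,\sO_F(\mu^* A))=0$ for $i>0$ gives local freeness in codimension one, and since $Y$ is normal one extends $V$ (or works with its reflexive hull) over the remaining locus, using that $\varphi$ is equidimensional. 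Then the induced $Y$-morphism $\psi : X' \to \PP(V)$ is birational (it is an isomorphism over the general point of $Y$) and both source and target are normal with $\sO_{X'}(\mu^* A) = \psi^* \sO_{\PP(V)}(1)$; as $\psi^* \sO_{\PP(V)}(1)$ is $\varphi$-ample and $\sO_{\PP(V)}(1)$ is relatively ample over $Y$, any $\psi$-fibre is contracted to a point yet $\mu^* A$ is $\varphi$-ample and hence $\psi$-ample, forcing $\psi$ to be finite, hence an isomorphism since it is birational onto a normal variety.

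For part (b), by Lemma \ref{lemmaidentifyfibres} the general fibre $F$ is either a $\PP^{r-1}$-bundle over $\PP^1$ with $\sO_F(\mu^* A) = \sO_{\PP(W)}(1)$ for an ample bundle $W$ on $\PP^1$, or has a cone structure over $\PP^1$, i.e. is the image of a birational contraction from $\PP(M \oplus \sO_{\PP^1}^{\oplus d})$. In either case $F$ carries a distinguished $\PP^{r-1}$-fibration onto $\PP^1$. The plan is to spread this out: the fibres of $F \to \PP^1$ are the $(r-1)$-planes contracted inside $F$, and letting $\tilde X \to X'$ be the normalisation of the universal family over the appropriate Chow component of $X$ parametrising these $(r-1)$-planes (exactly as in Setup \ref{setup} but for the subfoliation), one obtains a normal variety $\tilde X$ of dimension $n$ fibred over a normal base $\tilde Y$ of dimension $n - r + 1$ with general fibre $\PP^{r-1}$ and with $\sO_{\tilde X}(\tilde\mu^* A)$ restricting to $\sO_{\PP^{r-1}}(1)$; the composite $\tilde\mu : \tilde X \to X' \to X$ is birational. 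Now part (a), applied to the fibration $\tilde X \to \tilde Y$ (whose general fibre is the linear space $\PP^{r-1}$), yields $\tilde X \simeq \PP(\tilde V)$ with $\tilde V := (\tilde X \to \tilde Y)_* \sO_{\tilde X}(\tilde\mu^* A)$ and $\sO_{\tilde X}(\tilde\mu^* A) \simeq \sO_{\PP(\tilde V)}(1)$, which is the claim.

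The main obstacle I expect is in part (b): making precise the claim that the $(r-1)$-planes in the fibres $F$ sweep out an honest equidimensional family over a normal $(n-r+1)$-dimensional base $\tilde Y$, together with the bookkeeping that the birational map $\tilde\mu$ is a morphism (not merely a rational map). For the cone-structure case one has to argue that the vertex locus of the cone, fibrewise, glues to a subvariety over which $\tilde X \to X'$ is the relevant blow-down, and that $\tilde\mu^* A$ is still Cartier and restricts correctly; this is where the relative ampleness of $\mu^* A$ and the rigidity lemma do the work, in the same spirit as the inductive argument in the proof of Lemma \ref{lemmaadjunction}. The $\PP^{r-1}$-bundle case is easier since there $F$ itself is already smooth over $\PP^1$ and no contraction is involved.
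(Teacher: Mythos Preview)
Your approach to part (b) is essentially identical to the paper's: the paper also observes that the $\PP^{r-1}$-bundle (resp.\ cone) structure on the general leaf $F$ defines a rank-$(r-1)$ algebraically integrable subfoliation $\sF' \subset \sF_l$, takes $\tilde Y$ to be the normalisation of the corresponding locus in $\Chow{X}$ and $\tilde X$ the normalised universal family, and then invokes part (a). Your anticipated obstacles in (b) are non-issues precisely because you use the Chow construction of Setup~\ref{setup}: the universal family is equidimensional by definition, and $\tilde\mu$ is the tautological morphism from the universal family to $X$, not merely a rational map.

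For part (a) the paper gives no argument at all; it simply cites \cite[Prop.~4.10]{AD12} (improving \cite[Prop.~3.5]{a18}). Your sketch is in the right spirit, but it has a real gap at exactly the point where the cited result does the work. You appeal to ``flatness of $\sO_{X'}(\mu^* A)$ over the smooth locus of $Y$'', but $\varphi$ is only equidimensional and $X'$ only normal; without knowing $X'$ is Cohen--Macaulay there is no miracle-flatness argument, so neither local freeness of $V$ nor constancy of $h^0$ on fibres is available for free. Even granting that $V$ is reflexive, replacing it by its reflexive hull does not make it locally free, and more importantly the evaluation map $\varphi^* V \to \sO_{X'}(\mu^* A)$ need not be surjective over special fibres, so your morphism $\psi: X' \to \PP(V)$ is a priori only rational and the concluding Zariski-main-theorem step never starts. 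Showing that \emph{every} fibre of $\varphi$ is $(\PP^r,\sO(1))$ is exactly the non-trivial content of \cite[Prop.~4.10]{AD12}, and is typically handled via Fujita's $\Delta$-genus theory rather than by the cohomology-and-base-change argument you outline.
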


\begin{proof}
Statement a) is shown in \cite[Prop.4.10]{AD12} which improves \cite[Prop.3.5]{a18}.

For the proof of statement b) note that the $\PP^{r-1}$-bundle structure (resp. cone structure) 
on the general $\sF_l$-leaf defines an algebraically integrable foliation $\sF' \subset \sF_l$ of rank $r-1$.
As in Setup \ref{setup} we define $\tilde Y$ to be the normalisation of 
the closure of the locus in $\Chow{X}$ parametrising the general $\sF'$-leaf,
and $\tilde X$ as the normalisation of the universal family over $\tilde Y$. By construction 
the general fibre $\tilde F$ is isomorphic to $\PP^{r-1}$ and $\tilde \mu^* A$ restricted to  $\tilde F$
is the hyperplane divisor. Thus we can again apply \cite[Prop.4.10]{AD12}.
\end{proof}

\begin{proposition} \label{propositionquadric}
Suppose that we are in the situation of Setup \ref{setup}, and
suppose that $(F, \sO_F(\mu^* A))$ is a quadric.
Then $Y$ is a point, so $X$ itself is a quadric.
\end{proposition}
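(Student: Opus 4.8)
The plan is to argue by contradiction: assuming $\dim Y>0$, I would restrict $\varphi$ to a curve in $Y$, recognise the result as a quadric bundle through Lemma~\ref{lemmaquadricbundle}, and then contradict the ampleness of $A$ via the positivity of the relative anticanonical divisor.

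By Lemma~\ref{lemmaidentifyfibres} we are in the borderline case $l=1$, $\mu_{H_\bullet}(\sF_1\otimes A^*)=0$; set $\sF:=\sF_1$, $r:=\rk\sF$ (so that $\dim Y=n-r$) and suppose $\dim Y>0$. Take the MR-general curve $C\subset X$ of Setup~\ref{setup}, its strict transform $C'\subset X'$, and the normalisation $\tilde B$ of $\varphi(C')\subset Y$; after base change and normalisation one obtains a normal projective variety $X_{\tilde B}$ with an equidimensional fibration $p\colon X_{\tilde B}\to\tilde B$ of relative dimension $r$ and general fibre $(Q^r,\sO_{Q^r}(1))$, a generically finite morphism $\psi\colon X_{\tilde B}\to X'$, a lift of $C'$ to $X_{\tilde B}$ (possible since $C'$ is smooth), and the divisor $\sigma:=(\mu\circ\psi)^*A$, which is $p$-ample and, since $A$ is ample on $X$ and $\mu\circ\psi$ is generically finite onto its image, also nef and big on $X_{\tilde B}$. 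By Lemma~\ref{lemmaquadricbundle}, $X_{\tilde B}$ has canonical singularities and $K_{X_{\tilde B}/\tilde B}+r\sigma\sim p^*M$ for some Cartier divisor $M$ on $\tilde B$.

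The crucial, and I expect most delicate, point is that $\deg_{\tilde B}M=0$. Over a general point of $C'$ the map $\mu\circ\psi$ is a local isomorphism onto a point of $C$ lying outside the singular locus of $\sF$ and outside the ($\mu$-)exceptional locus (whose image in $X$ has codimension $\ge2$), and there the saturated relative tangent sheaf of $p$ is identified with $\sF$; since moreover all fibres of $p$ are hyperquadrics, hence reduced, this should give $K_{X_{\tilde B}/\tilde B}\cdot C'=-\det\sF\cdot C$. On the other hand $\mu_{H_\bullet}(\sF\otimes A^*)=0$ means that $(\sF\otimes A^*)|_C$ is semistable of degree $0$, so $\det\sF\cdot C=r\,(A\cdot C)$, while $\sigma\cdot C'=A\cdot C$; intersecting the relation $K_{X_{\tilde B}/\tilde B}+r\sigma\sim p^*M$ with the finite multisection $C'$ of $p$ therefore gives $p^*M\cdot C'=0$, whence $\deg_{\tilde B}M=0$ and $-K_{X_{\tilde B}/\tilde B}\equiv r\sigma$. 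This is a refinement of the positivity comparison behind \cite[Thm.\,3.1]{a11}, and making it rigorous in the presence of the singularities of $X'$ and of $\sF$ and of the base change $\tilde B\to Y$ is the main obstacle.

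To conclude, put $\mathcal E:=p_*\sO_{X_{\tilde B}}(\sigma)$, which is locally free of rank $r+2$; then $X_{\tilde B}$ embeds in $\PP(\mathcal E)$ as a divisor in $|\sO_{\PP(\mathcal E)}(2)\otimes p^*\delta|$ with $\sigma=\sO_{\PP(\mathcal E)}(1)|_{X_{\tilde B}}$. The adjunction formula on $\PP(\mathcal E)$ together with $\deg_{\tilde B}M=0$ gives $\deg\delta=-\deg\mathcal E$, and then the Grothendieck relation identifies $\deg\mathcal E$ with the self-intersection number $\sigma^{r+1}$ on the $(r+1)$-fold $X_{\tilde B}$, which is positive as $\sigma$ is nef and big. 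The defining section $0\ne q\in H^0(\tilde B,\Sym^2\mathcal E\otimes\delta)$ is a symmetric morphism $\hat q\colon\mathcal E^\vee\to\mathcal E\otimes\delta$, and the general fibre being a normal quadric forces $\hat q$ to have generic rank $r+2$ or $r+1$. If the generic rank is $r+2$, then $\hat q$ is injective and $\det\hat q$ is a nonzero section of a line bundle of degree $2\deg\mathcal E+(r+2)\deg\delta=-r\deg\mathcal E<0$, which is impossible. If the generic rank is $r+1$, then $r\ge2$ (a normal conic is smooth) and the radical of $\hat q$ saturates to a sub-line bundle $\mathcal K\subset\mathcal E^\vee$ with $\deg\mathcal K\ge\tfrac{r-1}{2}\deg\mathcal E>0$; the associated section of $p$ — the locus of vertices of the fibres, which lies on $X_{\tilde B}$ — then has $\sigma$-degree $-\deg\mathcal K<0$, contradicting the nefness of $\sigma$. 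Hence $\dim Y=0$, so $Y$ is a point and, $X'$ being the normalisation of the (unique) leaf closure, $X\simeq F$ is a quadric.
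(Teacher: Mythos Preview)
Your overall architecture matches the paper's: argue by contradiction, base-change to a curve, invoke Lemma~\ref{lemmaquadricbundle} to recognise a quadric bundle, use the slope condition to show $M$ is numerically trivial, and then derive a contradiction from $-K_{X_{\tilde B}/\tilde B}$ being nef and big. Two points are worth comparing.

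\emph{The base curve and $\deg M=0$.} You base-change by $\tilde B$, the normalisation of $\varphi(C')$, so that $C'$ becomes a finite multisection, and you flag the identification $K_{X_{\tilde B}/\tilde B}\cdot C'=-\det\sF\cdot C$ as the main obstacle. The paper avoids this by base-changing by $C$ itself via $\varphi|_C$: the diagonal $c\mapsto(c,c)$ gives a genuine \emph{section} $s\colon C\to X_C$, and \cite[Rem.~19]{KST07} shows that $X_C$ is smooth near $s(C)$ with $T_{X_C/C}\simeq p_{X'}^*\mu^*\sF_1$ there. Intersecting $K_{X_C/C}+r\,p_{X'}^*\mu^*A\simeq p_C^*M$ with $s(C)$ then gives $\deg M=0$ immediately from $\mu_{H_\bullet}(\sF_1\otimes A^*)=0$. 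Your version can be salvaged along the same lines, but the section makes the step you were worried about essentially one line.

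\emph{The final contradiction.} Here the two arguments genuinely diverge. The paper does not touch the quadratic form at all: once $-K_{X_C/C}$ is nef and big and $X_C$ has canonical singularities, Kawamata--Viehweg gives $H^1(X_C,p_C^*K_C)=0$; on the other hand $R^j(p_C)_*\sO_{X_C}=0$ for $j\ge1$, so $H^1(X_C,p_C^*K_C)\simeq H^1(C,K_C)\simeq H^0(C,\sO_C)\ne0$, a contradiction. Your route---embedding $X_{\tilde B}$ in $\PP(\mathcal E)$, computing $\deg\delta=-\deg\mathcal E$, identifying $\deg\mathcal E=\sigma^{r+1}>0$, and running a rank dichotomy on the symmetric map $\hat q\colon\mathcal E^\vee\to\mathcal E\otimes\delta$---is correct (the kernel of $\hat q$ is automatically saturated since a generically-zero map from a line bundle to a vector bundle is zero; the induced nondegenerate form on $\mathcal E^\vee/\mathcal K$ then gives exactly $\deg\mathcal K\ge\tfrac{r-1}{2}\deg\mathcal E$). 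It is more hands-on and requires knowing that every fibre of $p$ is a hyperquadric so that $\mathcal E$ is locally free and the relative embedding exists---facts the paper only states in the remark after Lemma~\ref{lemmaquadricbundle}. The vanishing argument is shorter and uses less about the fibration, but your approach has the merit of being entirely elementary once the quadric-bundle structure is in hand.
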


\begin{proof}
We will argue by contradiction and suppose that $Y$ has positive dimension.
Let $C \subset X$ be a MR-general curve, then $C$ does not meet the image of 
the $\mu$-exceptional locus so we can identify it to a curve in $X'$.
Denote by $X_C$ the normalisation of the fibre product $X' \times_Y C \subset X' \times C$,
and let \holom{p_{X'}}{X_C}{X'} be the natural map to the first factor.
The fibration $X' \times_Y C \rightarrow C$
admits a natural section 
$$
C \rightarrow X' \times_Y C \subset X' \times C, \ c \ \mapsto (c,c), 
$$ 
by the universal property of the normalisation we get a section of \holom{p_C}{X_C}{C}
which we denote by \holom{s}{C}{X_C}.
By \cite[Rem.19]{KST07} the normal variety $X_C$ is smooth in an analytic neighbourhood $U \subset X_C$ of $s(C)$ 
and 
\[
T_{X_C/C}|_U \simeq (p_{X'}^* \mu^* \sF_1)|_U. 
\]
Since $\mu_{H_\bullet}(\sF_1 \otimes A^*)=0$ by Lemma \ref{lemmaidentifyfibres} we have
\begin{equation} \label{eqnslopezero}
(K_{X_C/C} + r p_{X'}^* \mu^* A) \cdot s(C) = (K_{\sF_1} + r A) \cdot C = 0.
\end{equation}
By Lemma \ref{lemmaquadricbundle} the fibration $\holom{p_C}{X_C}{C}$ is a quadric bundle, i.e. the divisor 
$K_{X_C/C}$ is $\Q$-Cartier
and there exists a line bundle $M$ on $C$ such that 
$$
K_{X_C/C} + r p_{X'}^* \mu^* A \simeq p_C^* M.
$$ 
By \eqref{eqnslopezero} we have $p_C^* M \cdot s(C)=0$, so $M$ is numerically trivial.
This shows that $-K_{X_C/C}$ is nef and big. Yet this is impossible\footnote{Cf. \cite[Thm.5.1]{AD13} for a proof
in a more general setting.}: by Lemma \ref{lemmaquadricbundle}
the variety $X_C$ has at most canonical singularities and $p_C^* K_C - K_{X_C}$ is ample, 
so by the Kawamata-Viehweg vanishing theorem we have
$H^1(X_C, p_C^* K_C)=0.$
Yet by relative Kawamata-Viehweg vanishing the higher direct images $R^j (p_C)_* \sO_{X_C}$ 
vanish for $j \geq 1$ so we get 
$$
0 = H^1(C, K_C) \simeq H^0(C, \sO_C),
$$
a contradiction.
\end{proof}

\section{Proof of the main results}

We start by proving a statement in the case of an arbitrary multipolarisation:

\begin{theorem} \label{theoremmultipolarisation} 
Let $X$ be a normal projective variety of dimension $n$, and let $A$ be an ample Cartier divisor on $X$.
Let $H_\bullet$ be a multipolarisation on $X$.
If the twisted cotangent sheaf $\Omega_X \otimes A$ is not generically ample with respect to 
$H_\bullet$, one of the following holds:
\begin{enumerate}
\item There exists a normal projective variety $Y$ of dimension at most $n-1$
and a vector bundle $V$ on $Y$ such that
$X' := \PP(V)$
admits a birational morphism \holom{\mu}{X'}{X} such that $\sO_{X'}(\mu^* A) \simeq \sO_{\PP(V)}(1)$; or
\item $(X,  \sO_X(A)) \simeq (Q^n, \sO_{Q^n}(1))$ where $Q^n \subset \PP^{n+1}$ is a (not necessarily smooth) quadric hypersurface.
\end{enumerate}
\end{theorem}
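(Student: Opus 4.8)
The plan is to reduce everything to the fibre-space analysis carried out in Section \ref{sectiontwistedcotangent}. We work in the situation of Setup \ref{setup}, so we assume $\Omega_X \otimes A$ is not generically ample with respect to $H_\bullet$; this gives $\mu_{H_\bullet}(\sF_1 \otimes A^*) \geq 0$, and we fix the largest index $l$ with $\mu_{H_\bullet}(\sG_l \otimes A^*) \geq 0$. The constructions in Setup \ref{setup} then produce the rational quotient $X \dashrightarrow \chow{X}$ by the general $\sF_l$-leaf, its normalised base $Y$, the normalised universal family $\varphi\colon X' \to Y$, and the birational morphism $\mu\colon X' \to X$ with $\mu^* A$ being $\varphi$-ample; the general fibre $F$ satisfies $\dim F = r = \rk \sF_l$.

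First I would invoke Lemma \ref{lemmaidentifyfibres}: the polarised fibre $(F, \sO_F(\mu^* A))$ is one of $(\PP^r, \sO(1))$, a normal quadric $(Q^r, \sO(1))$, a $\PP(V)$ with $V$ ample on $\PP^1$, or an $F$ with a cone structure over $\PP^1$. I would then split into these cases. If $F \simeq \PP^r$, Proposition \ref{propositionlinearspaces} a) gives $X' \simeq \PP(\varphi_* \sO_{X'}(\mu^* A))$ with $\sO_{X'}(\mu^* A) = \sO_{\PP(V)}(1)$, and since $\dim Y = n - r \leq n-1$ we land in conclusion (a). If $F$ is a $\PP^{r-1}$-bundle or has a cone structure, Proposition \ref{propositionlinearspaces} b) directly produces a projective bundle $\PP(\tilde V)$ over a base $\tilde Y$ of dimension $n-r+1 \leq n-1$ with the birational morphism to $X$ and $\sO(\mu^* A) = \sO_{\PP(\tilde V)}(1)$, again conclusion (a). If $F$ is a quadric $Q^r$, Proposition \ref{propositionquadric} forces $Y$ to be a point, hence $X' \to X$ is birational with $X'$ itself a quadric of dimension $n$; since $X$ is normal and birational to a normal quadric polarised by $\sO(1)$, one checks $X \simeq Q^n$ with $\sO_X(A) \simeq \sO_{Q^n}(1)$, which is conclusion (b).

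The only genuinely new point beyond quoting the lemmas is making sure the case division is exhaustive and that nothing forces us simultaneously out of both alternatives. Here the second assertion of Lemma \ref{lemmaidentifyfibres} is what glues things together: whenever $F \not\simeq \PP^r$, we are guaranteed $l = 1$ and $\mu_{H_\bullet}(\sF_1 \otimes A^*) = 0$, which is exactly the hypothesis feeding Propositions \ref{propositionlinearspaces} b) and \ref{propositionquadric}; in the $\PP^r$ case no such refinement is needed since Proposition \ref{propositionlinearspaces} a) applies unconditionally. Thus every branch terminates in (a) or (b).

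The main obstacle I anticipate is not in the structure of the argument, which is essentially bookkeeping over the four cases, but in the final descent step in the quadric case: from $X'$ being a normal quadric hypersurface and $\mu\colon X' \to X$ birational with $\mu^*A = \sO_{Q^n}(1)$, one must conclude that $\mu$ is an isomorphism and $(X, \sO_X(A)) \simeq (Q^n, \sO_{Q^n}(1))$ — here $Q^n$ need not be normal, so one should observe that $\sO_{Q^n}(1)$ is very ample and the embedding $X' = Q^n \hookrightarrow \PP^{n+1}$ it induces has image a (possibly non-normal) quadric hypersurface whose normalisation recovers $X'$; since $A$ is Cartier on $X$ and $\mu$ contracts nothing that $\mu^* A$ meets, the morphism $\mu$ is finite and birational onto the normal variety $X$, hence an isomorphism, and $X$ is that quadric. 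I would phrase this carefully but it presents no real difficulty.
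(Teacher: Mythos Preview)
Your proof is correct and follows the paper's argument essentially verbatim: place yourself in Setup \ref{setup}, classify the general fibre via Lemma \ref{lemmaidentifyfibres}, and dispatch the four cases with Propositions \ref{propositionlinearspaces} and \ref{propositionquadric}. Two minor remarks: (i) the paper simply takes $l=1$ rather than the largest admissible $l$, but either choice works since Lemma \ref{lemmaidentifyfibres} forces $l=1$ in every non-$\PP^r$ case; (ii) your final worry about the quadric descent is misplaced---Proposition \ref{propositionquadric} already asserts that $X$ itself is a quadric, and the reason is immediate: once $Y$ is a point, $X'=F$ is a \emph{normal} quadric (by Lemma \ref{lemmaidentifyfibres}), $\mu^*A \simeq \sO_{Q^n}(1)$ is very ample so $\mu$ contracts no curve, hence $\mu$ is finite birational onto the normal variety $X$ and therefore an isomorphism. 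The parenthetical ``not necessarily smooth'' in the statement refers to singular normal quadrics, not non-normal ones.
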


\begin{proof}
By hypothesis the twisted cotangent sheaf is not generically ample
with respect to the multipolarisation $H_\bullet$, so the first piece $\sF_1 \subset T_X$ of the Harder-Narasimhan-filtration of $T_X$ satisfies
$\mu_{H_\bullet}(\sF_1 \otimes A^*) \geq 0$. Thus we are in the situation of Setup \ref{setup}, hence 
$\sF_1$ defines an algebraic foliation and the general fibres $(F, \sO_F(\mu^* A))$ of the graph $X' \rightarrow Y$ are classified in Lemma \ref{lemmaidentifyfibres}. If $(F, \sO_F(\mu^* A))$ is a linear space, the fibration
$X' \rightarrow Y$ is a projective bundle by Proposition \ref{propositionlinearspaces},a).
If $(F, \sO_F(\mu^* A))$ is a $\PP^{d-1}$-bundle or has a cone structure, we replace $X' \rightarrow Y$
by the projective bundle $\tilde X \rightarrow \tilde Y$ constructed in 
Proposition \ref{propositionlinearspaces},b).
If $(F, \sO_F(\mu^* A))$ is a quadric, then 
we know by Proposition \ref{propositionquadric} that $X$ itself is a quadric.
\end{proof}

So far all our considerations were valid for an arbitrary multipolarisation $H_\bullet$. 
However it is easy to see that the first part of Theorem \ref{theoremmain}
is not valid for an arbitrary multipolarisation (cf. \cite[Sect.1.B]{a11}).
The following lemma will turn out to be crucial for the proof of Theorem \ref{theoremmain}:

\begin{lemma} \label{lemmainequality}
Let $B$ be a projective manifold of dimension $m \geq 1$, and let $V$ be a nef vector bundle of rank $d+1 \geq 2$ on $B$.
Let $\holom{\pi}{\PP(V)}{B}$ be the projectivisation of $V$, and 
let $\zeta$ be the first Chern class of the tautological bundle $\sO_{\PP(V)}(1)$. 
Then we have
$$
(K_{\PP(V)/B}+d \zeta) \cdot \zeta^{m+d-1} \geq 0.
$$
\end{lemma}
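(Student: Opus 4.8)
The plan is to reduce the inequality to a statement about nef classes on $\PP(V)$ and then invoke the standard positivity of Chern classes of nef vector bundles. First I would recall that $K_{\PP(V)/B} = -(d+1)\zeta + \pi^* \det V$, so that
$$
K_{\PP(V)/B} + d\zeta = -\zeta + \pi^* \det V.
$$
Hence the quantity to be estimated is
$$
(K_{\PP(V)/B}+d\zeta)\cdot \zeta^{m+d-1} = \pi^*(\det V)\cdot \zeta^{m+d-1} - \zeta^{m+d}.
$$
By the projection formula $\pi^*(\det V)\cdot \zeta^{m+d-1} = \det V \cdot \pi_*(\zeta^{m+d-1})$, and since $\dim \PP(V) = m+d$ the pushforward $\pi_*(\zeta^{m+d-1})$ is a curve class on $B$ which, by the defining Grothendieck relation for Segre classes, equals $s_{m-1}(V)$ capped against $[B]$ — more precisely $\pi_*(\zeta^{m+d-1}) = s_{m-1}(V)\cap[B]$, a $1$-cycle. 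So the first term is $\det V \cdot s_{m-1}(V)$ and the second is $\zeta^{m+d} = \deg s_m(V) = s_m(V)\cap[B]$, a $0$-cycle.

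The heart of the argument is then to show $\det V \cdot s_{m-1}(V) \geq \zeta^{m+d}$ using nefness of $V$. The cleanest way is to iterate the projectivisation: set $B_0 = B$, $V_0 = V$, and successively let $B_{i+1} = \PP(V_i)$ with $V_{i+1} = \sO_{B_i}(1) \oplus (\text{pullback of lower terms})$ — or, more efficiently, note that on $\PP(V)$ the tautological quotient gives a surjection $\pi^* V \twoheadrightarrow \sO_{\PP(V)}(1)$, so $\sO_{\PP(V)}(1)$ is nef and moreover $\pi^*\det V - \zeta$ is represented by the kernel, which is a nef bundle of rank $d$ (a quotient/sub of a nef bundle on a fibration with nef fibres — here one uses that $V$ nef implies the tautological kernel $\sE$ sits in $0\to\sE\to\pi^*V\to\sO(1)\to 0$ with $\sE$ nef). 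Therefore $c_1(\sE) = \pi^*\det V - \zeta$ is a nef class and so is $\zeta$, and the product of any $m+d-1$ of these nef classes with $\zeta$ — in particular $(\pi^*\det V - \zeta)\cdot\zeta^{m+d-1} \geq 0$ — is nonnegative because on the projective manifold $\PP(V)$ the intersection number of $m+d$ nef divisor classes is $\geq 0$. But that last product is exactly $\pi^*(\det V)\cdot\zeta^{m+d-1} - \zeta^{m+d} = (K_{\PP(V)/B}+d\zeta)\cdot\zeta^{m+d-1}$, which is what we want.

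The step I expect to be the main obstacle is justifying that the tautological kernel $\sE$ is nef — equivalently, that $\pi^*\det V - \zeta$ is a nef class. This is where one genuinely uses $V$ nef rather than merely $\sO_{\PP(V)}(1)$ nef. I would handle it via the standard fact (Demailly–Peternell–Schneider, or \cite[Thm.6.2.12]{Laz04} type statements) that an extension or quotient bundle built from a nef bundle is nef: concretely, $\sE^*$ is a subsheaf of $\pi^*V^*$; dually, $\sE$ is a quotient of... — the cleanest route is actually to observe $\wedge^d \sE = \sE \cong \det(\pi^* V)\otimes \sO(-1)$ only up to rank, so instead argue directly that $c_1(\sE)\cdot\zeta^{m+d-1}\geq 0$ by pushing forward: $c_1(\sE) = \pi^*\det V - \zeta$, and $\pi^*\det V\cdot\zeta^{m+d-1} = \det V\cdot s_{m-1}(V)\cap[B] \geq 0$ and $\geq \zeta^{m+d}$ would again need the nefness input. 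The genuinely robust formulation is: all Chern/Segre numbers $s_i(V)$ of a nef bundle $V$ are nonnegative, and more generally any monomial in $\zeta$ and $\pi^*c_1(V)$ of top degree on $\PP(V)$ is $\geq 0$; granting that, $\pi^*(\det V)\cdot\zeta^{m+d-1}\geq 0$ and the inequality $\geq \zeta^{m+d}$ follows from $(\pi^*\det V - \zeta)$ being a difference of a nef class and $\zeta$ whose product with $\zeta^{m+d-1}$ is a Segre number $s_{m-1}$ minus $s_m$ — one reduces to the one-variable inequality $s_{m-1} \geq s_m$ for nef bundles, which itself follows by the splitting principle and induction on $m$. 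So the plan's crux is the Segre-number positivity of nef bundles, which I would cite rather than reprove.
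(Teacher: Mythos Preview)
Your central claim---that the tautological kernel $\sE$ in
$0 \to \sE \to \pi^* V \to \sO_{\PP(V)}(1) \to 0$
is nef whenever $V$ is nef---is false, and the whole argument collapses with it. Take $V = \sO_B^{\oplus d+1}$ trivial (certainly nef). Then $\sE \simeq \Omega_{\PP(V)/B}(1)$, and on any fibre $\PP^d$ the restriction of $\sE$ to a line is $\sO_{\PP^1}^{\oplus d-1} \oplus \sO_{\PP^1}(-1)$, which is not nef. In the same example $c_1(\sE) = \pi^*\det V - \zeta = -\zeta$ is anti-nef, so the weaker claim that $c_1(\sE)$ is a nef divisor class also fails. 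The underlying confusion is that quotients of nef bundles are nef, but subbundles need not be; $\sE$ is a subbundle.

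Your fallback attempts do not recover the argument. The assertion that the problem reduces to ``$s_{m-1} \geq s_m$ for nef bundles'' is not well-posed (these are classes of different codimensions) and is not the inequality at hand: what you actually need is $c_1(V)\cdot s_{m-1}(V) - s_m(V) \geq 0$ as a number, and neither bare Segre positivity nor the splitting principle alone yields this, since the expression is a signed combination of Chern monomials. The paper's proof identifies this quantity exactly as the Schur polynomial $s_{(2,1^{m-2},0)}(V)$ via the Littlewood--Richardson rule applied to $s_{(1)}(V)\cdot s_{(1^{m-1})}(V) = s_{(1^m)}(V) + s_{(2,1^{m-2},0)}(V)$, and then invokes the Fulton--Lazarsfeld theorem that Schur polynomials of nef bundles are nonnegative. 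That identification is the missing idea.
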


\begin{remark*}
If $V$ is globally generated, the statement is quite straightforward: intersecting $d$ general elements of the 
free linear system $|\sO_{\PP(V)}(1)|$ we obtain a projective manifold $Z \subset X$ such that
the induced map $\pi|_Z: Z \rightarrow B$ is birational. By the adjunction formula we have
$$
(K_{\PP(V)/B}+d \zeta)|_Z \simeq K_{Z/B}
$$
which is an effective divisor since $B$ is smooth. Thus we have
$$
(K_{\PP(V)/B}+d \zeta) \cdot \zeta^{m+d-1} = K_{Z/B} \cdot (\zeta|_Z)^{m-1} \geq 0.
$$
\end{remark*}

\begin{proof}
By the canonical bundle formula we have $K_{\PP(V)/B}+d \zeta = \pi^* \det V - \zeta$. 
If $m=1$ the statement immediately follows from the equality $\zeta^{d+1}=\pi^* \det V \cdot \zeta^d$.

Suppose now that $m \geq 2$. For $k \in \N$ we denote by $s_{(1^k)}(V)$ the Schur polynomial of degree $k$ associated
to the partition $\lambda_i=1$ for $i=1, \ldots, k$ (cf. \cite[Ch.8.3]{Laz04} for the relevant definitions).  
By \cite[Ex.8.3.5]{Laz04} we have
$$
\pi^* \det V \cdot  \zeta^{m+d-1} = s_{(1)}(V) \cdot s_{(1^{m-1})}(V)
$$
and
$$
\zeta^{m+d} = s_{(1^{m})}(V).
$$
Yet by the Littlewood-Richardson rule \cite[Lemma 14.5.3]{Ful98} we have
$$
s_{(1)}(V) \cdot s_{(1^{m-1})}(V) = s_{(1^{m})}(V) + s_{(2, 1^{m-2}, 0)}(V),
$$
where $s_{(2, 1^{m-2}, 0)}(V)$ is the Schur polynomial of degree $m$ corresponding to the partition $\lambda_1=2$, $\lambda_m=0$ and $\lambda_i=1$
for all other $i$.
Thus we see that
$$
(K_{\PP(V)/B}+d \zeta) \cdot \zeta^{m+d-1} = s_{(2, 1^{m-2}, 0)}(V)
$$
which is non-negative by \cite[Thm.8.3.9]{Laz04}.
\end{proof}

\begin{proof}[Proof of Theorem \ref{theoremmain}]
Suppose that $(X,  \sO_X(A))$ is not isomorphic to  $(\PP^n, \sO_{\PP^n}(1))$.
Arguing by contradiction we suppose that $\Omega_X \otimes A$ 
is not generically nef with respect to the polarisation $A$.
Then the first piece $\sF_1 \subset T_X$ of the Harder-Narasimhan-filtration of $T_X$  satisfies
\begin{equation} \label{eqn1}
\mu_{A}(\sF_1 \otimes A^*) > 0.
\end{equation} 
Thus we are in the situation of Setup \ref{setup}, in particular 
$\sF_1$ defines an algebraic foliation of rank $r:=\rk \sF_1$. By Lemma \ref{lemmaidentifyfibres} the general fibre 
$(F, \sO_F(\mu^* A))$ of the graph $X' \rightarrow Y$ 
is a linear projective space. Thus we know by Proposition \ref{propositionlinearspaces},a)
that $X'$ is a projectivised bundle $\PP(V)$ where $V:= \varphi_* (\sO_{X'}(\mu^* A))$.
 
Let $\holom{\eta}{B}{Y}$ be a desingularisation, then we have $X' \times_Y B \simeq \PP(V_B)$
where $V_B:=\eta^* V$. Denote by $\holom{\nu}{\PP(V_B)}{X}$ the birational morphism obtained
by composing $\mu$ with the natural map $\PP(V_B) \rightarrow X'$.
By Lemma \ref{lemmainequality} we have
\begin{equation} \label{eqn2}
(K_{\PP(V_B)/B}+ r \nu^* A) \cdot (\nu^* A)^{n-1} \geq 0.
\end{equation}
The slope $\mu_A(\sF_1 \otimes A^*)$ is a positive multiple of the intersection product
$$
(-K_{\sF_1} - r A) \cdot A^{n-1}.
$$
Since $A$ is ample we can represent (a positive multiple of) $A^{n-1}$ by a MR-curve $C$ that does not meet
the image of $\nu$-exceptional locus. Thus there exists an open neighbourhood $C \subset U \subset X$ such that
$K_{\sF_1}|_U = K_{\PP(V_B)/B}|_U$. In particular we have
$$
(K_{\PP(V_B)/B}+ r \nu^* A) \cdot (\nu^* A)^{n-1} = (K_{\sF_1} + r A) \cdot A^{n-1}.
$$
Yet this shows that \eqref{eqn1} contradicts \eqref{eqn2}.
This shows the first part of the statement, the second part is a special case of Theorem \ref{theoremmultipolarisation}. 
\end{proof}

As mentioned in the introduction, the proof of Theorem \ref{theoremKO} is essentially a combination
of arguments and results due to Araujo and Druel \cite{AD12, AD13}. The new ingredient is
Theorem \ref{theoremmain} and some modifications due to our more general setting.

\begin{proof}[Proof of Theorem \ref{theoremKO}]
Let $\sG \subset \sF$ be a torsion-free subsheaf. 
By Theorem \ref{theoremmain} the twisted cotangent sheaf $\Omega_X \otimes A$ 
is generically nef with respect
to $A$, so we have
$$
\mu_A(\sG \otimes A^*) \leq 0,  
$$
and by hypothesis $\mu_A(\sF \otimes A^*)=0$. Thus $\sF$ is semistable with respect to the polarisation $A$ and 
$$
\det \sF \cdot A^{n-1}= r A^n>0.
$$
Thus the restriction $\sF|_C$ to a MR-general curve $C$ is semistable with ample determinant, hence
it is ample. In particular the Bogomolov-McQuillan theorem applies and the general $\sF$-leaf is algebraic 
with rationally connected closure. Let $Y$ be the normalisation of the closure in $\Chow{X}$
of the locus parametrising the closure of the general $\sF$-leaves, 
and let $X'$ be the normalisation of the universal family over $Y$. 
By construction the natural map \holom{\mu}{X'}{X} is birational and the fibration  $\holom{\varphi}{X'}{Y}$
is equidimensional of dimension $r$, 
the general fibre $F$ being the normalisation of the closure of a general $\sF$-leaf. 

{\em Step 1. Description of $X'$.}
As in the situation of 
Setup \ref{setup} we could now use Lemma \ref{lemmaidentifyfibres} to describe $F$, but the log-leaf structure of Araujo-Druel gives a more precise information: by \cite[Rem.3.11]{AD12} there exists an effective
Weil $\Q$-divisor $\Delta$ such that 
$$
K_{X'/Y} + \Delta \sim_\Q \mu^* K_{\sF} \sim_\Q -r \mu^* A.
$$
In particular $(F, \Delta \cap F)$ is a log Fano variety of dimension $r$ and index $r$. By \cite[Thm.2.5]{AD12}
this implies that $F$ is a projective space or a quadric. Moreover by \cite[Lemma 2.2]{AD12} the pair
$(F, \Delta \cap F)$ is klt unless $F \simeq \PP^r$ and $\Delta \cap F$ is a hyperplane.
Yet $(F, \Delta \cap F)$ is not klt, since $\det \sF$ is ample \cite[Prop.3.13]{AD12}. Thus
$(F, \sO_F(\mu^* A))$ is a linear projective space, in particular by
Proposition \ref{propositionlinearspaces},a) we have
$X' \simeq \PP(V)$ with $V:=\varphi_*(\sO_{X'}(\mu^* A))$. We claim that $\det V$ is ample:
by \cite[Prop.2]{Fuj92} the line bundle
$\det V$ is semiample, so it is sufficient
to prove that $\det V \cdot C>0$ for every curve $C \subset Y$. If this was not the case the restriction $V|_C$
would be a nef vector bundle with numerically trivial determinant, hence if we set $X'_C := \fibre{\varphi}{C}$
and $\holom{\varphi_C:=\varphi|_{X_C'}}{X_C'}{C}$,
then
$$
(\mu^* A)^{r+1} \cdot X'_C = \varphi_C^* (\det V|_C) \cdot (A|_{X_C'})^r = 0.
$$
Since $A$ is ample this implies that $X'_C$ is contracted by $\mu$ onto a subvariety of dimension at most $r$. In particular all the points of the curve $C$ parametrise the same cycle in $X$. This contradicts
the construction of $Y$ as a normalisation of a subvariety in the Chow variety $\Chow{X}$.

{\em Step 2. Description of $\Delta$.}
Note that $\Delta$ is $\Q$-Cartier, since $K_{X'/Y}$ and $\mu^* K_{\sF}$ are $\Q$-Cartier. We have also
seen that $\Delta$ has an irreducible component $E$ with coefficient one such that the general fibre
of $\varphi|_E: E \rightarrow Y$ is a projective space of dimension $r-1$. We claim that $\Delta=E$ and $E \rightarrow Y$
is a $\PP^{r-1}$-bundle. 

{\em Proof of the claim.}
If $E \rightarrow Y$ is not a $\PP^{r-1}$-bundle there exists a
$\varphi$-fibre $F_0=\fibre{\varphi}{0}$ contained in $\Delta$.  The same holds if the support of $\Delta$ is reducible since $E$ is the unique component of $\Delta$ that surjects onto $Y$. Arguing by contradiction we suppose that
there exists a $\varphi$-fibre $F_0$ contained in $\Delta$. Let $C \rightarrow Y$ be a general non-constant morphism
such that $0 \in C$, and set $X'_C:= X' \times_Y C$.
Denote by $\holom{\nu}{X'_C}{X'}$ the natural map to the first factor, and by \holom{\varphi_C}{X_C'}{C} the $\PP^r$-bundle structure.
Note that we have $X_C' \simeq \PP(V_C)$ where $V_C:= (\varphi_C)_* (\sO_{X_C'}(\nu^* \mu^* A))$, 
moreover by construction
$$
\nu^* \Delta = \Delta' + F_0
$$
with $\Delta'$ an effective $\Q$-divisor. Since we have
$$
(K_{X_C'/C}+\nu^* \Delta) \sim_\Q \nu^* \mu^* K_{\sF} \sim_\Q - r \nu^* \mu^* A,
$$
and $K_{X_C'/C} = \varphi_C^* \det V_C - (r+1) \nu^* \mu^* A$, we obtain
$$
\Delta' \sim_\Q \nu^* \mu^* A - \varphi_C^* \det V_C - F_0.
$$
However by \cite[Lemma 4.12,b)]{AD12} no multiple of $\nu^* \mu^* A - \varphi_C^* \det V_C - F_0$
has a global section, a contradiction.

{\em Step 3. The $\mu$-exceptional locus.} We first claim that the $\mu$-exceptional locus
is contained in the divisor $E=\Delta$: let $C$ be a curve in $X'$ such that $\mu(C)$ is a point.
Then we have $\mu^* A \cdot C=0$, so $E \cdot C = - K_{X'/Y} \cdot C$. Since $\mu$ is finite on the $\varphi$-fibres and $\det V$ is ample we have $\varphi^* \det V \cdot C>0$.
Therefore we have
$$
E \cdot C = - K_{X'/Y} \cdot C = (\varphi^* \det V^* + (r+1) \mu^* A) \cdot C < 0,
$$
hence $C$ is contained in $E$. 
Set now $W:= (\varphi|_E)_* (\sO_E(\mu^* A))$, then $W$ is a nef vector bundle of rank $r$.
We have already seen that
$$
E \in |
\sO_{\PP(V)}(1) \otimes \varphi^* \det V^*
|,
$$
so pushing down the exact sequence
$$
0 \rightarrow \sO_{X'}(-E+\mu^* A) \rightarrow \sO_{X'}(\mu^* A) \rightarrow \sO_E(\mu^* A) \rightarrow 0
$$
to $Y$ we obtain an exact sequence
\begin{equation} \label{eqnextension}
0 \rightarrow \det V \rightarrow V \rightarrow W \rightarrow 0
\end{equation}
and $\det W \simeq \sO_W$. Thus $W$ is a nef vector bundle with trivial determinant, 
moreover we have a morphism $\holom{\mu_E}{E}{X}$ such that $\sO_{\PP(W)}(1) \simeq \sO_E(\mu_E^* A)$.
By Lemma \ref{lemmatrivialbundle} below this implies that $W \simeq \sO_Y^{\oplus d}$. 
Since $\sO_{\PP(V)}(1) \simeq \sO_{X'}(\mu^* A)$ is semiample we know by
\cite[Cor.4]{Fuj92} that the exact sequence \eqref{eqnextension} splits,
thus $X$ is a generalised cone in the sense of \cite[1.1.8]{BS95}. 
\end{proof}

\begin{lemma} \label{lemmatrivialbundle}
Let $Y$ be a normal projective variety, and let $W$ be a nef vector bundle of rank $r$ on $Y$ such that $\det W \equiv 0$.
Set $E:=\PP(W)$ and suppose that there exists a (not necessarily surjective) morphism $\holom{\mu_E}{E}{X}$ 
to a normal projective variety $X$, and an ample Cartier divisor $A$ on $X$ such that $\sO_{\PP(W)}(1) \simeq \sO_E(\mu_E^* A)$. 
Then we have $W \simeq \sO_Y^{\oplus r}$, in particular $E \simeq Y \times \PP^{r-1}$.
\end{lemma}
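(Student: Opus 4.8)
The plan is to show that a nef vector bundle $W$ with numerically trivial determinant that moreover carries a ``polarising'' map $\mu_E$ as in the statement must in fact be numerically flat, hence after passing to the right setup, trivial. First I would recall the standard fact (due to Demailly--Peternell--Schneider, or in the projective setting to Langer) that a nef vector bundle $W$ with $\det W \equiv 0$ is \emph{numerically flat}: both $W$ and $W^*$ are nef. The key point to extract from the hypothesis on $\mu_E$ is that $\sO_{\PP(W)}(1)$ is not merely nef but is the pullback of an ample divisor under a morphism that is finite on the fibres of $\pi\colon \PP(W)\to Y$; in particular $\sO_{\PP(W)}(1)$ is semiample and its Iitaka fibration is $\mu_E$ itself (up to Stein factorisation).

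The heart of the argument is to rule out the possibility that $W$ has a nontrivial ``moving part''. Since $W$ is numerically flat, it admits a filtration whose graded pieces are locally free and numerically flat with vanishing Chern classes; on a projective variety such bundles are extensions of Hermitian-flat bundles, i.e. come from unitary representations of $\pi_1(Y)$ (this is the Simpson--Deligne correspondence, or for our purposes simply the statement that numerically flat bundles are ``local systems''). The morphism $\mu_E$ restricted to a fibre $\PP^{r-1}$ of $\pi$ over a general point $y\in Y$ is an embedding (it is finite and $\sO_{\PP(W_y)}(1)$ is very ample on $\PP^{r-1}$, being $\mathcal O(1)$), so $\mu_E$ maps each $\pi$-fibre isomorphically to a linear $\PP^{r-1}\subset X$. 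Thus $\mu_E(E)$ is covered by a family of projective spaces of dimension $r-1$, parametrised by $Y$, and $\mu_E$ identifies $E$ with the universal family. Now I would intersect: pick $r-1$ general members of the free linear system $|\sO_{\PP(W)}(1)|$ — free because $\sO_{\PP(W)}(1)=\mu_E^*A$ and $A$ is very ample after replacing it by a multiple, which does not change the conclusion — to obtain a subvariety $Z\subset \PP(W)$ with $\pi|_Z\colon Z\to Y$ finite and birational, hence an isomorphism since $Y$ is normal. Restricting the tautological exact sequence to $Z\cong Y$ exhibits a nowhere-vanishing section, i.e. a splitting $\sO_Y \hookrightarrow W$ with nef, numerically trivial quotient. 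Inducting on the rank $r$ (the quotient $W/\sO_Y$ is again nef with $\det \equiv 0$ and inherits a polarising map on its projectivisation, which sits inside $\PP(W)$), we conclude $W$ is a successive extension of copies of $\sO_Y$.

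To upgrade this from ``extension of trivial bundles'' to $W\simeq \sO_Y^{\oplus r}$, I would use once more that $\sO_{\PP(W)}(1)$ is semiample together with \cite[Cor.4]{Fuj92} (exactly as invoked in the proof of Theorem \ref{theoremKO}): a semiample tautological class forces every extension appearing in the filtration to split, since the extension class lives in $H^1(Y,\sO_Y)$ and a nonzero class there would produce a section of $\sO_{\PP(W)}(1)$ with non-semiample behaviour along the corresponding sub-$\PP^{r-1}$-bundle — more precisely, a nonsplit extension $0\to\sO_Y\to E'\to\sO_Y\to 0$ has $\PP(E')$ containing a unique section with normal bundle $\sO_Y$ and along which $\sO_{\PP(E')}(1)$ restricts to $\sO_Y$, contradicting that the Iitaka fibration of $\sO_{\PP(E')}(1)$ (induced from $\mu_E$) is generically finite on it. Hence all extensions split and $W\simeq\sO_Y^{\oplus r}$, giving $E=\PP(W)\simeq Y\times\PP^{r-1}$ as claimed. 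The main obstacle I anticipate is the bookkeeping in the inductive step: one must check that the sub-bundle $\sO_Y\hookrightarrow W$ produced by the section on $Z$ gives rise to a genuine $\PP^{r-2}$-subbundle of $E$ on which $\mu_E$ still restricts to a polarising morphism, so that the inductive hypothesis applies verbatim; this is where I would be most careful, using that the section $Z$ is cut out by divisors in $|\mu_E^*A|$ and hence $\mu_E|_Z$ is again finite onto its image.
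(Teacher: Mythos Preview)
Your overall strategy is close in spirit to the paper's, but the second paragraph contains a genuine gap that makes the argument circular. You want to cut $\PP(W)$ by $r-1$ general members of $|\sO_{\PP(W)}(1)|$ to obtain a section $Z \to Y$, and you justify freeness of this linear system by ``replacing $A$ by a multiple''. This does not work: if you pass from $A$ to $mA$, the hypothesis $\sO_{\PP(W)}(1)\simeq\mu_E^*A$ becomes $\sO_{\PP(W)}(m)\simeq\mu_E^*(mA)$, so the free linear system you obtain is $|\sO_{\PP(W)}(m)|$, not $|\sO_{\PP(W)}(1)|$. Cutting by $r-1$ general members of $|\sO_{\PP(W)}(m)|$ then yields $Z\to Y$ of degree $m^{r-1}>1$, not a section, and you no longer get an injection $\sO_Y\hookrightarrow W$. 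Nor can you recover the original statement from the conclusion that $S^mW$ is trivial: already in rank one a torsion line bundle of order $m$ shows this fails. Since $H^0(\PP(W),\sO_{\PP(W)}(1))=H^0(Y,W)$, the existence of even a single nonzero section of $\sO_{\PP(W)}(1)$ is precisely what the lemma is meant to produce, so assuming $|\sO_{\PP(W)}(1)|$ is free begs the question.

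The paper instead pursues the observation you make in your first paragraph and then abandon. Since $W$ is numerically flat, all its Chern classes vanish \cite[Cor.1.19]{DPS94}, and the Grothendieck relation gives $c_1(\sO_{\PP(W)}(1))^r\equiv 0$; thus the semiample line bundle $\sO_{\PP(W)}(1)$ has numerical (hence Iitaka) dimension exactly $r-1$. The associated fibration $\tau\colon E\to G$ therefore has $\dim G=r-1$, and each $\pi$-fibre $\PP^{r-1}$ surjects finitely onto $G$; the equality $1=\sO_{\PP(W)}(1)^{r-1}\cdot\PP^{r-1}=\deg(\PP^{r-1}\to G)\cdot A_G^{r-1}$ forces $G\simeq\PP^{r-1}$ with $A_G\simeq\sO_{\PP^{r-1}}(1)$. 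Hence $h^0(Y,W)=h^0(E,\sO_{\PP(W)}(1))=h^0(\PP^{r-1},\sO_{\PP^{r-1}}(1))=r$, and a numerically flat bundle of rank $r$ with $r$ independent global sections is trivial. No induction, no extension-splitting, and no appeal to \cite{Fuj92} is needed here.
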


\begin{proof} Using the projection formula we see that we can suppose without loss of generality that
$Y$ is smooth. Since $W$ is nef with numerically trivial determinant, it is numerically flat. In particular
all the Chern classes $c_i(W)$ vanish \cite[Cor.1.19]{DPS94}, so by the usual relations for the
tautological divisor \cite[App.A, Sect.3]{Har77}
we see that 
$$
\sO_{\PP(W)}(1)^r \equiv 0 \ \mbox{in} \ H^{2r}(E, \R),
$$
i.e. the numerical dimension of $\sO_{\PP(W)}(1)$ is $r-1$. 
By hypothesis $\sO_{\PP(W)}(1) \simeq \sO_E(\mu_E^* A)$ is semiample, so some positive multiple induces a fibration
$\holom{\tau}{E}{G}$ onto some normal projective variety $G$ of dimension $r-1$. By the rigidity lemma one sees 
easily that $\mu_E$ factors through $\tau$, in particular there exists an ample Cartier divisor $A_G$ on $G$ such
that $\sO_{\PP(W)}(1) \simeq \sO_E(\tau^* A_G)$. Any fibre of the natural map $\PP(W) \rightarrow Y$ is a $\PP^{r-1}$
mapping surjectively onto $G$. Since we have
$$
1 = \sO_{\PP(W)}(1)^{r-1} \cdot \PP^{r-1} = \deg( \PP^{r-1} \rightarrow G ) \cdot A_G^{r-1} \geq  1,
$$
we see that $G \simeq \PP^{r-1}$ and $\sO_G(A_G) \simeq \sO_{\PP^{r-1}}(1)$. In particular we obtain
$$
h^0(Y, W) = h^0(E, \sO_{\PP(W)}(1)) = h^0(\PP^{r-1}, \sO_{\PP^{r-1}}(1)) = r.
$$
Since $W$ is numerically flat of rank $r$, this immediately implies that $W$ is trivial. 
\end{proof}

\def\cprime{$'$}

\end{document}